\DeclareMathOperator{\rk}{rk}
\newcommand{\contract}{/} % edge contraction
\newcommand{\delete}{\setminus} % edge deletion
\newcommand{\cut}[1]{\mathrm{cutdom}(#1)} % cut dominant
\newcommand{\conv}{\mathrm{conv}} % convex hull
\newcommand{\inp}[2]{\langle #1, #2\rangle} % inner product
\newcommand{\level}{\mathrm{lev}} % level
\newcommand{\subtour}[1]{\mathrm{subtour}(#1)} % subtour
\newcommand{\GTSP}[1]{\mathrm{GTSP}(#1)} % graphical traveling salesman polyhedron
\newcommand{\Mone}{$\Theta$} % name of Fonlupt and Naddef's M_1 minor
\newcommand{\R}{\mathbb{R}} % reals
\newcommand{\Z}{\mathbb{Z}} % integers
\newtheorem{thm}{Theorem}
\newtheorem{prop}[thm]{Proposition}
\newtheorem{lem}[thm]{Lemma}
\newtheorem{rem}[thm]{Remark}
\newtheorem*{claim*}{Claim}
\tikzset{snake it/.style={decorate, decoration=snake}}
\title{Cut dominants and forbidden minors}
\author{Michele Conforti \and Samuel Fiorini \and Kanstantsin Pashkovich}
\begin{document}
\maketitle

\begin{abstract} The cut dominant of a graph is the unbounded polyhedron whose points are all those that dominate some convex combination of proper cuts. Minimizing a nonnegative linear function over the cut dominant is equivalent to finding a minimum weight cut in the graph. We give a forbidden-minor characterization of the graphs whose cut dominant can be defined by inequalities with integer coefficients and right-hand side at most $2$. Our result is related to the forbidden-minor characterization of TSP-perfect graphs by Fonlupt and Naddef (Math.\ Prog., 1992). We prove that our result implies theirs, with a shorter proof. Furthermore, we establish general properties of forbidden minors for right-hand sides larger than $2$.
\end{abstract}

\section{Introduction}

Let $G=(V,E)$ be an undirected graph. Parallel edges and loops are allowed in $G$ but we assume $|V| \geqslant 2$. Sometimes we use the notation $V(G)$ and $E(G)$ to denote the node and edge set of $G$, respectively, and $|G|$ and $||G||$ to denote the size of these sets.

For $S \subseteq V$, $\delta(S)$ is the set of edges with one endnode in $S$ and one endnode in $\overline{S} := V \smallsetminus S$. When $S = \{v\}$ is a singleton, we write $\delta(v)$ to mean $\delta(\{v\})$. An edge set $E'\subseteq E$ is a \emph{cut} if  $E'=\delta(S)$ for some $S\subseteq V$ and a \emph{proper} cut if one can take 
$\varnothing \neq S \subsetneq V$. Notice that $\varnothing$ is a proper cut of $G$ if and only if $G$ is a disconnected graph.

\subsection{Cut dominant} \label{sec:intro-cut-dominant}

The \emph{characteristic vector} $\chi^{E'} \in \{0,1\}^{E}$ of an edge set $E' \subseteq E$ is defined as
\begin{equation*}
\chi^{E'}(e):=\begin{cases}
1&\text{if } e \in E'\\
0&\text{otherwise}\,.
\end{cases}
\end{equation*}

The \emph{cut dominant} of $G=(V,E)$ is the polyhedron defined as
$$
\cut{G} := \conv \{\chi^{\delta(S)} \in \{0,1\}^{E} \mid \varnothing \ne S \subsetneq V\} + \R^{E}_+\,.
$$
In other words, $\cut{G}$ is a dominant polyhedron (see \cite[Section 5.8]{SchrijverBook} for properties of dominant polyhedra), whose vertices are the characteristic vectors of inclusionwise minimal proper cuts of $G$ and whose extreme rays are the unit vectors $\chi^{\{e\}}$, $e\in E$. Notice that  $\cut{G}=\R^{E}_+$ if and only if $G$ is a disconnected graph.

Given $c\in \R^E$, we let
\begin{equation*}
\lambda^c(G) := \min \{\inp{c}{x} \mid x \in\cut{G}\}\,.
\end{equation*}
Since $\cut{G}$ is a dominant polyhedron, $\lambda^c(G)$ is finite if and only if $c$ is nonnegative and in this case $\lambda^c(G)$ is the minimum cost of a proper cut in $G$. We call such a cut \emph{minimum}, or sometimes \emph{minimum with respect to $c$} to avoid confusion.

Although $\lambda^c(G)$ can be computed efficiently, a system of linear inequalities that describes $\cut{G}$ in its original space $\R^E$ is unknown. This is our first motivation to study the facets of the cut dominant. Our second motivation, explained below, is the connection to the traveling salesman problem: via blocking polarity, the facets of the cut dominant correspond exactly to vertices of the subtour elimination polyhedron. 

If one allows extra variables, one can find a complete linear description of $\cut{G}$. One such extended formulation of $\cut{G}$ is obtained by picking an arbitrary root $r \in V$ and bidirecting the edges of $G = (V,E)$, which gives a digraph $D = (V,A)$. The extended formulation has two variables $y(u,v)$ and $y(v,u)$ for each edge $uv$ of $G$, and one constraint per $r$-arborescence\footnote{A subset $B \subseteq A$ is \emph{$r$-arborescence} if it is obtained from a spanning tree of $G$ by directing all edges away from the root $r$.} of $D$, besides nonnegativity and the equations linking the $x$-variables to the $y$-variables:
\begin{align}
\label{eq:EF_outbranching}
\sum_{a \in B} y(a) &\geqslant 1 \quad \text{ for all $r$-arborescences } B\\
\label{eq:EF_nonneg}
y(a) &\geqslant 0 \quad \text{ for all $a \in A$}\\[1ex]
\label{eq:EF_proj}
x(uv) &= y(u,v) + y(v,u) \quad \text{ for all $uv \in E$}\,.
\end{align}
By a result of Edmonds~\cite{Edmonds73}, the vertices of the dominant polyhedron defined by \eqref{eq:EF_outbranching}--\eqref{eq:EF_nonneg} are the incidence vectors of the nonempty cuts of $G$, directed away from $r$. This implies that this polyhedron projects to $\cut{G}$ via \eqref{eq:EF_proj}.

We resume our discussion of $\cut{G}$ in its original space. It is easy to see that a linear inequality $\inp{c}{x} \geqslant \lambda$ is valid for $\cut{G}$ if and only if $c \in \R^E_+$ and $\lambda \leqslant \lambda^c(G)$.

%%% lem:facet

\begin{lem}[\cite{CFN85}]\label{lem:facet}
Let $\inp{c}{x} \geqslant \lambda$ be a valid inequality for $\cut{G}$ with $\lambda > 0$, and let $E^c := \{e\in E \mid c(e) \neq 0\}$ be the \emph{support} of $c$. Then $\inp{c}{x} \geqslant \lambda$ is facet-defining if and only if $\lambda = \lambda^c(G)$ and there exists a family $\mathcal F$ of $|E^c|$ subsets of $V$ such that:
\begin{enumerate}[(i)]
\item For each $S\in \mathcal F$, the cut $\delta(S)$ is minimum with respect to\ $c$.
\item The vectors $\chi^{\delta(S) \cap E^c} $, $ S \in \mathcal{F}$ are linearly independent.
\end{enumerate}
Furthermore the family $\mathcal F$ can be chosen to be laminar.\footnote{Family $\mathcal F$ is \emph{laminar} if every two sets in $\mathcal F$ are either disjoint or comparable for inclusion.}
\end{lem}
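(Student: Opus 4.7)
The plan is to split the proof into the facet characterization and the laminar refinement.

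\emph{Characterization.} First, I would check the easy equation $\lambda = \lambda^c(G)$: validity of $\inp{c}{x} \geqslant \lambda$ forces $\lambda \leqslant \lambda^c(G)$, and non-emptiness of the corresponding face forces $\lambda \geqslant \lambda^c(G)$. Since $\cut{G}$ is full-dimensional in $\R^E$, the inequality is facet-defining iff the face $F := \cut{G} \cap \{x \mid \inp{c}{x} = \lambda^c(G)\}$ has dimension $|E|-1$. I would describe $F$ as the Minkowski sum of the convex hull of the minimum-cut vertices $\chi^{\delta(S)}$ and the conic hull of $\{\chi^{\{e\}} : e \in E \setminus E^c\}$, the latter being exactly the extreme rays of $\cut{G}$ contained in $c^\perp$. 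Splitting $\R^E = \R^{E^c} \oplus \R^{E \setminus E^c}$ and projecting to the first factor, a short linear-algebra computation yields
\[ \dim F \;=\; \dim A \;+\; |E \setminus E^c|\,, \]
where $A \subseteq \R^{E^c}$ is the affine hull of the projected vertices $\chi^{\delta(S) \cap E^c}$. So $\dim F = |E|-1$ iff $\dim A = |E^c|-1$, which, since the projected vertices all lie on the hyperplane $H := \{y : \inp{c|_{E^c}}{y} = \lambda^c(G)\}$ that avoids the origin, is equivalent to the existence of $|E^c|$ projected vertices that are linearly independent. This is precisely (i) and (ii).

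\emph{Laminar refinement via uncrossing.} Given a family $\mathcal{F}$ satisfying (i)--(ii), suppose $S, T \in \mathcal{F}$ cross. The elementary vector identity
\[ \chi^{\delta(S)} + \chi^{\delta(T)} \;=\; \chi^{\delta(S \cap T)} + \chi^{\delta(S \cup T)} + 2 \chi^{E(S \setminus T,\, T \setminus S)}\,, \]
combined with the facts that $\delta(S \cap T)$ and $\delta(S \cup T)$ are proper cuts (the crossing being genuine) of cost at least $\lambda^c(G)$, forces both $\delta(S \cap T)$ and $\delta(S \cup T)$ to be minimum with respect to $c$ and $E(S \setminus T, T \setminus S) \cap E^c = \varnothing$. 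Restricting the identity to the coordinates in $E^c$ gives
\[ \chi^{\delta(S) \cap E^c} + \chi^{\delta(T) \cap E^c} \;=\; \chi^{\delta(S \cap T) \cap E^c} + \chi^{\delta(S \cup T) \cap E^c}\,. \]
If neither replacement $T \mapsto S \cap T$ nor $T \mapsto S \cup T$ preserves linear independence, both $\chi^{\delta(S \cap T) \cap E^c}$ and $\chi^{\delta(S \cup T) \cap E^c}$ would lie in $\mathrm{span}\{\chi^{\delta(R) \cap E^c} : R \in \mathcal{F} \setminus \{T\}\}$, and then the restricted identity together with $\chi^{\delta(S) \cap E^c}$ (already in that span) would place $\chi^{\delta(T) \cap E^c}$ there too, contradicting linear independence of $\mathcal{F}$. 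So at least one replacement succeeds.

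\emph{Main obstacle.} The delicate point is termination of this uncrossing process, since a single replacement can move any obvious set-size potential (e.g.\ $\sum_{S \in \mathcal{F}} |S|^2$) in either direction. I would resolve this either by performing \emph{paired} uncrossings $\{S,T\} \mapsto \{S \cap T, S \cup T\}$ whenever linear independence is preserved (for which $\sum_S |S|^2$ strictly increases by strict convexity and is bounded above, giving termination), or by a lexicographic argument on the sorted multiset of set sizes combined with the single-replacement rule established above. Once a linearly independent laminar family of size $|E^c|$ is reached, the lemma is proved.
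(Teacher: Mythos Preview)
The paper does not give its own proof of this lemma; it is quoted from \cite{CFN85} (Cornu\'ejols--Fonlupt--Naddef) and used as a black box throughout. So there is no ``paper's proof'' to compare against. On the merits of your argument:

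Your characterization step is correct and standard, and the uncrossing step is fine: when $S,T\in\mathcal F$ cross, submodularity forces $S\cap T$ and $S\cup T$ to be minimum cuts with $E(S\setminus T,\,T\setminus S)\cap E^c=\varnothing$, and the restricted identity then shows that at least one single replacement preserves linear independence.

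The gap is exactly where you flag it. Neither of your two proposed fixes for termination is complete. For the paired uncrossing with potential $\sum_S|S|^2$: you write ``whenever linear independence is preserved'', but paired replacement can genuinely fail---for instance if $\chi^{\delta(S\cup T)\cap E^c}$ already lies in $\mathrm{span}\{\chi^{\delta(R)\cap E^c}:R\in\mathcal F\setminus\{S,T\}\}$---and you do not say what to do in that case. For the lexicographic route: the single-replacement rule only guarantees that \emph{one} of $T\mapsto S\cap T$ or $T\mapsto S\cup T$ works, and if only the intersection works the relevant size goes down, pushing the sorted tuple the wrong way; the symmetric argument for $S$ has the same defect, and the bad cases for $S$ and for $T$ can occur simultaneously.

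A clean way to close the gap avoids iterating on $\mathcal F$ altogether. Let $\mathcal L$ be the family of all shores of minimum cuts, and let $\mathcal F_0\subseteq\mathcal L$ be any \emph{maximal laminar} subfamily. Claim: the vectors $\chi^{\delta(S)\cap E^c}$, $S\in\mathcal F_0$, already span the same space as those with $S\in\mathcal L$. If not, choose $T\in\mathcal L$ whose vector lies outside $W:=\mathrm{span}\{\chi^{\delta(S)\cap E^c}:S\in\mathcal F_0\}$ and, among such $T$, one crossing the fewest members of $\mathcal F_0$. By maximality this number is positive; pick $S\in\mathcal F_0$ crossing $T$. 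By the restricted identity and $\chi^{\delta(S)\cap E^c}\in W$, at least one of $\chi^{\delta(S\cap T)\cap E^c},\chi^{\delta(S\cup T)\cap E^c}$ lies outside $W$. Because $\mathcal F_0$ is laminar, any $R\in\mathcal F_0$ that crosses $S\cap T$ (respectively $S\cup T$) also crosses $T$, while $S$ itself no longer crosses either; hence the crossing count strictly drops, contradicting the choice of $T$. Now take any basis of $W$ from within $\mathcal F_0$.
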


Since $\cut{G}$ is full-dimensional, facet-defining inequalities are uniquely defined up to scaling by a positive number. In particular, every facet-defining inequality $\inp{c}{x} \geqslant \lambda$ can be written in a unique way so that its cofficients are integral and relatively prime. Such an inequality is said to be in \emph{minimum integer form}. 

%It turns out to be sometimes helpful to scale all facet-defining inequalities with a positive right-hand side such that the right-hand side is the same for all.

For a nonnegative integer $k$, graph $G$ is called a \emph{$k$-graph} if every facet $\inp{c}{x} \geqslant \lambda$ of $\cut{G}$ satisfies $\lambda \leqslant k$, when written in minimum integer form. We let $k^*(G)$ denote the minimum $k$ such that $G$ is a $k$-graph. 

It follows from Lemma~\ref{lem:facet} that $k^*(G)=0$ if $G$ is disconnected. Moreover, $k^*(G)=1$ if $G$ is a tree. This is due to the fact that if $G$ is a tree and $\inp{c}{x} \geqslant \lambda$ is a facet-defining inequality of $\cut{G}$ in minimum integer form with $\lambda > 0$, then a cut is minimum only if it is a fundamental cut of $G$. Hence, by Lemma~\ref{lem:facet}, the minimum cuts are exactly the fundamental cuts. Therefore, $c(e) = \lambda = 1$ for all $e \in E$. Cornu\'ejols, Fonlupt and Naddef~\cite{CFN85} show that $k^*(G) \leqslant 2$ whenever $G$ is series-parallel.

The next result of Conforti, Rinaldi and Wolsey~\cite{CRW04} shows that $k^*(G)=1$ or $k^*(G)$ is even.

\begin{thm}[\cite{CRW04}]
 If $\inp{c}{x} \geqslant \lambda$ is a facet-defining inequality for $\cut{G}$ in minimum integer form and $\lambda$ is odd, then $\lambda = 1$ and $E^c$ is a spanning tree of $G$.
\end{thm}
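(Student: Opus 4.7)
The plan is to apply Lemma~\ref{lem:facet} and exploit the parity of $\lambda$. From the lemma I obtain a laminar family $\mathcal{F}$ of proper subsets of $V$ with $|\mathcal{F}|=|E^c|$, such that each $\delta(S)$ is a minimum $c$-cut of value $\lambda$ and the vectors $\chi^{\delta(S)\cap E^c}$, $S\in\mathcal{F}$, are linearly independent in $\R^{E^c}$. Replacing sets by their complements if necessary, I may assume that $\mathcal{F}$ contains no complementary pair. Moreover $(V,E^c)$ must be connected---otherwise some component would yield a proper cut of $c$-weight $0<\lambda$---so $|E^c|\ge|V|-1$.

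The key parity estimate is as follows: for distinct $S,T\in\mathcal{F}$, the set $S\triangle T$ is a proper nonempty subset of $V$, and $\delta(S\triangle T)=\delta(S)\triangle\delta(T)$ is a proper cut of $G$. Hence
\[
c(\delta(S\triangle T))=2\lambda-2\,c(\delta(S)\cap\delta(T))\;\ge\;\lambda.
\]
The left-hand side is an even integer; since $\lambda$ is odd, it must actually be at least $\lambda+1$, whence $c(\delta(S)\cap\delta(T))\le(\lambda-1)/2$.

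For $\lambda=1$ the estimate collapses to $c(\delta(S)\cap\delta(T))=0$ for distinct $S,T\in\mathcal{F}$: the sets $\delta(S)\cap E^c$ are pairwise disjoint, each is a single edge of $c$-value $1$, and the map sending $S\in\mathcal{F}$ to its unique edge is a bijection $\mathcal{F}\leftrightarrow E^c$. In particular $c=\chi^{E^c}$. If $E^c$ contained a cycle $C$, then $C$ would meet every cut of $G$ in an even number of edges, yet it would meet the cut corresponding to any edge of $C$ in exactly that one edge---a contradiction. Hence $E^c$ is acyclic, and by connectedness a spanning tree.

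It remains to rule out odd $\lambda\ge 3$. The plan is to decompose $c=\chi^R+2c'$, where $R:=\{e\in E:c(e)\text{ odd}\}\subseteq E^c$ and $c':=(c-\chi^R)/2\in\Z^E_{\ge 0}$, and to argue that both $\inp{\chi^R}{x}\ge 1$ and $\inp{2c'}{x}\ge\lambda-1$ are valid for $\cut{G}$; their sum is then exactly $\inp{c}{x}\ge\lambda$, and as long as $c\notin\{0,1\}^E$ (so neither $\chi^R$ nor $c'$ is a positive scalar multiple of $c$), the facet decomposes as a nontrivial nonnegative combination of two distinct valid inequalities, contradicting facet-definingness. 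The parity estimate gives $|R\cap\delta(S)|\equiv 1\pmod 2$ (hence $\ge1$) and $c'(\delta(S))\le(\lambda-1)/2$ on cuts of $\mathcal{F}$, so the decomposition is tight there. The main obstacle will be verifying validity of the two component inequalities on \emph{every} proper cut of $G$---in particular, that $(V,R)$ is connected---and dispatching the boundary case $c=\chi^{E^c}$ with $\lambda\ge 3$ odd, most likely via a modulo-$2$ argument in which pairing any $\mathbb{F}_2$-dependence $\sum_{S\in\mathcal{F}'}\chi^{\delta(S)\cap E^c}\equiv 0$ with $c$ forces $|\mathcal{F}'|$ to be even, and the laminarity of $\mathcal{F}$ then constrains dependences sharply enough to force $|E^c|\le |V|-1$, hence a tree with minimum cut $1$, contradicting $\lambda\ge 3$.
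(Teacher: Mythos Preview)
The paper does not prove this theorem; it is quoted from \cite{CRW04} without argument, so there is no in-paper proof to compare against. Your treatment of the case $\lambda=1$ is correct and clean: the parity estimate $c(\delta(S)\cap\delta(T))\le(\lambda-1)/2=0$ forces the minimum cuts in $\mathcal{F}$ to be pairwise disjoint on $E^c$, hence singletons, whence $c=\chi^{E^c}$ and the cycle/cut parity argument finishes.

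The case $\lambda\ge 3$ odd, however, is not proved---and you say so yourself (``the main obstacle will be\ldots'', ``most likely via\ldots''). The decomposition $c=\chi^R+2c'$ only yields a contradiction if both $\inp{\chi^R}{x}\ge 1$ and $\inp{2c'}{x}\ge\lambda-1$ are valid for $\cut{G}$, i.e.\ hold on \emph{every} proper cut. For the first you need $(V,R)$ connected; for the second you need $|R\cap\delta(S)|\le c(\delta(S))-\lambda+1$ for every proper $S$, which on a minimum cut forces $|R\cap\delta(S)|=1$, not merely $|R\cap\delta(S)|\equiv 1\pmod 2$. Neither of these follows from your pairwise parity estimate on $\mathcal{F}$, and nothing in the proposal bridges that gap. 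The boundary case $c=\chi^{E^c}$ is likewise only gestured at: observing that every $\mathbb{F}_2$-dependence among the cuts of $\mathcal{F}$ has even support is correct, but upgrading this to ``no nontrivial $\mathbb{F}_2$-dependence'' (so that $|E^c|=|\mathcal{F}|\le |V|-1$ and $E^c$ is a tree) is exactly the content that is missing, and laminarity alone does not supply it---e.g.\ $\{\{1\},\{2\},\{1,2\}\}$ is laminar with $\mathbb{F}_2$-dependent cuts. As written, the proposal handles $\lambda=1$ but leaves $\lambda\ge 3$ as a plan rather than a proof.
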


From Remarks~\ref{rem:loops} and~\ref{rem:skeleton} it follows that if the simple graph $G$ is obtained from another graph $G'$ by deleting all loops and keeping one edge in each set of parallel edges, then $k^*(G) = k^*(G')$. In particular, $k^*(G')=k^*(G)=1$ if $G'$ is a tree with some edges duplicated and loops added.

A \emph{minor} of $G$ is a graph that can be obtained from $G$ by any sequence of edge contractions, edge deletions and node deletions. For $e \in E$, let $G \delete e$ and $G \contract e$ be the graphs obtained from $G$ by deleting and contracting respectively the edge $e$, and for $v\in V$ let $G-v$ be the graph obtained from $G$ by deleting $v$.

%%% lem:deletion_contraction
\begin{lem}\label{lem:deletion_contraction}
If $G'$ is obtained from $G$ by edge contractions and edge deletions, then $k^*(G') \leqslant k^*(G)$.
\end{lem}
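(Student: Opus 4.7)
The plan is to reduce, by induction on the number of operations, to the two basic cases of deleting or contracting a single edge $e$ of $G$, and in each case to lift a given facet-defining inequality $\inp{c'}{x} \geqslant \lambda'$ of $\cut{G'}$ (in minimum integer form) to a facet-defining inequality $\inp{c}{x} \geqslant \lambda'$ of $\cut{G}$ with the same right-hand side. Once this lift is constructed with nonnegative integer coefficients and $c$ agrees with $c'$ on $E(G')$, the divisibility observation $\gcd(c) \mid \gcd(c') = 1$ will place the lifted inequality in minimum integer form and yield $\lambda' \leqslant k^*(G)$.

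For the deletion case $G' = G \delete e$, I will simply set $c(e) := 0$. Since $V(G) = V(G')$ and $\delta_{G'}(S) = \delta_G(S) \setminus \{e\}$ for every proper $S \subseteq V$, the identity $\inp{c}{\chi^{\delta_G(S)}} = \inp{c'}{\chi^{\delta_{G'}(S)}} \geqslant \lambda'$ gives validity and $\lambda^c(G) = \lambda'$, while $E^c = (E')^{c'}$ ensures that any family $\mathcal F'$ witnessing Lemma~\ref{lem:facet} for $(G', c')$ transfers verbatim to $(G, c)$.

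The contraction case $G' = G \contract e$ with $e = uv$ is the main technical step. Here I will define $c(e)$ as the smallest nonnegative integer making the lifted inequality valid on every cut of $G$ containing $e$, namely $c(e) := \max\bigl(0,\; \max\{\lambda' - c'(\delta_G(S) \setminus \{e\}) : S \text{ proper}, e \in \delta_G(S)\}\bigr)$. For $S$ with $e \notin \delta_G(S)$ the set $S$ descends to a proper cut $S'$ of $G'$ with $\delta_G(S) = \delta_{G'}(S')$, so validity there is automatic; the definition of $c(e)$ handles the remaining cuts. I will then lift the family $\mathcal F'$ to $\mathcal F \subseteq 2^{V(G)}$ by taking preimages under the contraction map (so both endpoints of $e$ always lie on the same side of each lifted set), obtaining $|E^{c'}|$ minimum cuts of $G$ whose characteristic vectors restricted to $E^c$ are linearly independent.

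The main obstacle will be the subcase $c(e) > 0$, in which the support $E^c = (E')^{c'} \cup \{e\}$ strictly exceeds $(E')^{c'}$ and Lemma~\ref{lem:facet} demands one additional minimum cut. I will supply it by choosing a proper set $S^* \subseteq V(G)$ attaining the maximum in the definition of $c(e)$: by construction $e \in \delta_G(S^*)$ and $\inp{c}{\chi^{\delta_G(S^*)}} = c(e) + c'(\delta_G(S^*) \setminus \{e\}) = \lambda'$, so $\delta_G(S^*)$ is minimum and its characteristic vector has a $1$ in the $e$-coordinate, giving linear independence from the lifted vectors (which all carry a $0$ there). Combined with the divisibility remark, this completes the proof.
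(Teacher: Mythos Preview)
Your argument is correct, modulo the harmless omission that you should first dispose of the case $\lambda' = 0$ (where the inequality is a nonnegativity constraint and there is nothing to prove) before invoking Lemma~\ref{lem:facet}, which is stated only for $\lambda > 0$. The lifting in the contraction case is carefully done: the choice of $c(e)$ guarantees validity and simultaneously manufactures the extra minimum cut $\delta_G(S^*)$ needed to enlarge the family from $\mathcal F'$, and the gcd observation correctly pins down the minimum integer form.

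That said, your route is genuinely different from the paper's and noticeably more laborious. The paper argues top-down and purely polyhedrally: for contraction, $\cut{G\contract e}$ is (after forgetting the $x(e)$-coordinate) the face $\{x(e)=0\}$ of $\cut{G}$, so a complete linear description of $\cut{G\contract e}$ is obtained from one of $\cut{G}$ by simply deleting the $c(e)x(e)$ term in every inequality; for deletion, $\cut{G\delete e}$ is the coordinate projection of $\cut{G}$, and since all coefficients are nonnegative, Fourier--Motzkin elimination just discards every inequality with $c(e)>0$. In either case every facet of $\cut{G'}$ is inherited from a facet of $\cut{G}$, and passing to minimum integer form can only divide the right-hand side. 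Your approach instead goes bottom-up: you start from a single facet of $\cut{G'}$ and explicitly construct a preimage facet of $\cut{G}$ via Lemma~\ref{lem:facet}. This buys you an explicit lifting formula for $c(e)$ (and in particular shows that the lifted facet can be chosen with the \emph{same} right-hand side, not merely a multiple), at the cost of verifying linear independence by hand. The paper's argument is shorter and does not rely on Lemma~\ref{lem:facet} at all.
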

\begin{proof}
It suffices to verify the statement for $G'=G \contract e$ and for $G'=G \delete e$, where $e\in E(G)$.

For $G'= G \contract e$, the polyhedron $\cut{G'}$ is the intersection of the face of $\cut{G}$ defined by $x(e) \geqslant 0$. Thus, a linear description of $\cut{G'}$ can be obtained from a linear description of $\cut{G}$ by deleting the term $c(e) x(e)$ in each inequality $\inp{c}{x} \geqslant \lambda$ in the description.

For $G' := G \delete e$, the polyhedron $\cut{G'}$ is the projection of $\cut{G}$ onto $\R^{E(G')}$. In this case, a linear description of $\cut{G'}$ can be obtained from a linear description of $\cut{G}$ by eliminating the variable $x(e)$ from the system. Since all inequalities $\inp{c}{x}\geqslant \lambda$ in a linear description of $\cut{G}$ have nonnegative coefficients, the elimination of $x(e)$ simply corresponds to removing from the system all inequalities with $c(e) > 0$. 

In both cases, $k^*(G') \leqslant k^*(G)$.
\end{proof}

%%% lem:minordom
\begin{lem}\label{lem:minordom}
If $G'$ is a minor of $G$, and $G$ is connected, then $k^*(G') \leqslant k^*(G)$.
\end{lem}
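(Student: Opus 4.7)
The plan is to reduce the general minor operation to a sequence of edge contractions and edge deletions performed on $G$ itself, so that Lemma~\ref{lem:deletion_contraction} together with the earlier remark that loops do not affect $k^*$ yields the conclusion. The only operation not directly handled by Lemma~\ref{lem:deletion_contraction} is node deletion, which is precisely where the connectivity hypothesis on $G$ will enter.

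First, I would observe that any minor $G'$ of $G$ can be obtained in two stages: let $V' \subseteq V(G)$ be the union of the branch sets defining $G'$; delete the vertices of $V(G) \smallsetminus V'$ to produce the induced subgraph $G[V']$; then perform edge contractions (collapsing each branch set to a single vertex) and edge deletions (removing the extra edges between branch sets) inside $G[V']$ to reach $G'$. By Lemma~\ref{lem:deletion_contraction}, $k^*(G') \leqslant k^*(G[V'])$. Hence it suffices to prove $k^*(G[V']) \leqslant k^*(G)$, the case $|V'| \leqslant 1$ being trivial.

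Next I would fix a spanning tree $T$ of $G$ (which exists since $G$ is connected) and root it at some $r \in V'$. For each $v \in V(G) \smallsetminus V'$ let $c(v) \in V'$ be the first vertex of $V'$ met on the path in $T$ from $v$ to $r$. The sets $G_u := \{u\} \cup c^{-1}(u)$, for $u \in V'$, then partition $V(G)$ and each induces a connected subtree of $T$ containing exactly one vertex of $V'$. To $G$ I would then apply the following edge operations: contract every edge of $T$ whose endpoints lie in a common group $G_u$, and delete every remaining edge of $G$ that is not an edge of $G[V']$. After the contractions each $G_u$ collapses to a single vertex identified with $u$; since each group contains exactly one $V'$-vertex, every edge of $G$ with both endpoints in $V'$ joins two distinct groups and thus survives as the corresponding edge between two vertices of $V'$. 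The resulting graph is therefore $G[V']$ together with some loops (arising from non-tree edges both of whose endpoints lie in one group).

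Since loops do not affect $k^*$, this edge-minor of $G$ has $k^*$ equal to $k^*(G[V'])$, and Lemma~\ref{lem:deletion_contraction} yields $k^*(G[V']) \leqslant k^*(G)$, completing the argument. The main technical point to verify is that each $G_u$ is indeed a connected subtree of $T$ (so its tree edges can all be contracted, collapsing $G_u$ to a single vertex without disturbing the other groups), and that no edge of $G[V']$ lies inside a single group $G_u$; both follow from the definition of $c(v)$ as the first $V'$-vertex on the path to $r$, together with the fact that each $G_u$ contains exactly one element of $V'$.
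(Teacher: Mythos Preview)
Your proof is correct. Both your argument and the paper's rest on the same reduction to Lemma~\ref{lem:deletion_contraction}, but the paper reaches that point more quickly by a case split that you avoid. The paper first observes that if $G'$ is disconnected then $k^*(G') = 0$ and there is nothing to prove; when $G'$ is connected it simply invokes (without proof) the standard fact that a connected minor of a connected graph can always be realised by edge contractions and deletions alone, and then applies Lemma~\ref{lem:deletion_contraction} directly. Your argument instead constructs explicitly, via the spanning-tree grouping, a sequence of edge contractions and deletions taking $G$ to $G[V']$ (up to loops), and thereby handles connected and disconnected $G'$ uniformly---in effect proving a slightly stronger version of the ``standard fact'' along the way. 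The paper's route is shorter and exploits the trivial structure of $\cut{G'}$ in the disconnected case; yours is more self-contained and does not rely on any unstated graph-minor folklore. One small expository point: after you ``delete every remaining edge of $G$ that is not an edge of $G[V']$'' you have already removed the loops you later mention, so the appeal to Remark~\ref{rem:loops} is unnecessary (though harmless).
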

\begin{proof}
We may assume that $G'$ is connected because otherwise $k^*(G') = 0$ and the statement holds trivially. Now, since $G$ and $G'$ are connected, $G'$ can be obtained from $G$ by edge contractions and edge deletions only (no node deletions) and the statement follows from Lemma~\ref{lem:deletion_contraction}
\end{proof}

It follows from Lemma~\ref{lem:minordom} that $k^*(G) = 0$ only if $G$ is disconnected, since $k^*(K_2) = 1$. Moreover, $k^*(G) = 1$ only if $G$ is a graph obtained from a tree by duplicating some edges and adding some loops, since $k^*(K_3) = 2$. (As usual, $K_n$ denotes the complete graph on $n$ nodes.) 

Given a nonnegative integer $k$, $G$ is a \emph{minor-minimal non-$k$-graph} if $k^*(G) > k$ but $k^*(G') \leqslant k$ for every proper minor $G'$ of $G$. It follows from Lemma~\ref{lem:minordom} and the graph minor theorem of Robertson and Seymour~\cite{RS04} that for fixed $k$, the list of minor-minimal non-$k$-graphs is finite. Remark that the only minor-minimal non-$0$-graph is $K_2$ and the only minor-minimal non-$1$-graph is $K_3$. The main result of this paper (Theorem \ref{thm:2-graphs} below) characterizes the minor-minimal non-$2$-graphs.

%By Lemma \ref{lem:minordom}, $G$ is a minor-minimal non-$k$-graph if and only if $\cut{G}$ contains a facet $\inp{c}{x} \geqslant \lambda$ in minimum integer form where $\lambda > k$, but for every $e \in E$ the polyhedra  $\cut{G\delete e}$, $\cut{G\contract e}$ contain no such facet. 

%Conforti, Rinaldi and Wolsey~\cite{CRW04} have shown that if $\inp{c}{x} \geqslant \lambda$ is a facet of $\cut{G}$ in minimum integer form and $\lambda$ is odd, then $\lambda = 1$ and obviously the support of $c$ is a spanning tree of $G$.  This shows that it suffices to study minor-minimal non-$k$-graphs when $k$ is even.

Consider now the following graphs: The \emph{prism} is the triangular prism, i.e., the complement of the $6$-cycle $C_6$; and the \emph{pyramid} is the graph obtained from $K_4$ by choosing a node $v\in V(K_4)$ and subdividing each of the three edges incident to $v$ (see Figure~\ref{fig:prism_pyramid}).

\begin{figure}[ht]
\centering
\begin{tikzpicture}[inner sep = 2.5pt, scale=1.5]
\def\rSmall{.17}
\def\rEllipsSmall{.35}
\def\rEllipsBig{1.4}
\tikzstyle{vtx}=[circle,draw,thick,fill=gray!25]
\tikzstyle{cut}=[blue]
\node[vtx] (a) at (0,0) {};
\node[vtx] (b) at (1.95,-.75) {};
\node[vtx] (c) at (1.15,.25) {};
\node[vtx] (a') at (0,2) {};
\node[vtx] (b') at (1.95,1.25) {};
\node[vtx] (c') at (1.15,2.25) {};
\draw[thick] (a) -- (b) -- (c) -- (a);
\draw[thick] (a') -- (b') -- (c') -- (a');
\draw[very thick, magenta] (a) -- (a');
\draw[very thick, magenta] (b) -- (b');
\draw[very thick, magenta] (c) -- (c');
\draw[cut] (a) circle (\rSmall);
\draw[cut] (b) circle (\rSmall);
\draw[cut] (c) circle (\rSmall);
\draw[cut] (a') circle (\rSmall);
\draw[cut] (b') circle (\rSmall);
\draw[cut] (c') circle (\rSmall);
\draw[cut] ($(a)!0.5!(a')$) ellipse ({\rEllipsSmall} and {\rEllipsBig});
\draw[cut] ($(b)!0.5!(b')$) ellipse ({\rEllipsSmall} and {\rEllipsBig});
\draw[cut] ($(c)!0.5!(c')$) ellipse ({\rEllipsSmall} and {\rEllipsBig});
\def\rEllipsBig{1.1}
\def\rEllipsSmall{.32}
\node[vtx] (e) at (6,2.25) {};
\node[vtx] (f) at (6,.825) {};
\node[vtx] (g) at (5.5,1.25) {};
\node[vtx] (h) at (6.5,1.25) {};
\node[vtx] (i) at (5,0) {};
\node[vtx] (j) at (7,0) {};
\node[vtx] (k) at (6,-.75) {};
\draw[thick] (k) -- (i) -- (j)--(k);
\draw[very thick, magenta] (e) -- (f) -- (k);
\draw[very thick, magenta] (e) -- (h) -- (j);
\draw[very thick, magenta] (e) -- (g) -- (i);
\draw[cut] (f) circle (\rSmall);
\draw[cut] (g) circle (\rSmall);
\draw[cut] (h) circle (\rSmall);
\draw[cut] (i) circle (\rSmall);
\draw[cut] (j) circle (\rSmall);
\draw[cut] (k) circle (\rSmall);
\draw[cut] ($(f)!0.5!(k)$) ellipse ({\rEllipsSmall} and {\rEllipsBig});
\draw[cut, rotate around={22:($(h)!0.5!(j)$)}] ($(h)!0.5!(j)$) ellipse ({\rEllipsSmall} and {\rEllipsBig});
\draw[cut, rotate around={-22:($(g)!0.5!(i)$)}] ($(g)!0.5!(i)$) ellipse ({\rEllipsSmall} and {\rEllipsBig});

\end{tikzpicture}
\caption{The prism (left) and pyramid (right), together with two laminar families of cuts (blue) proving that each of the graphs is a non-$2$-graph.}
\label{fig:prism_pyramid}
\end{figure}
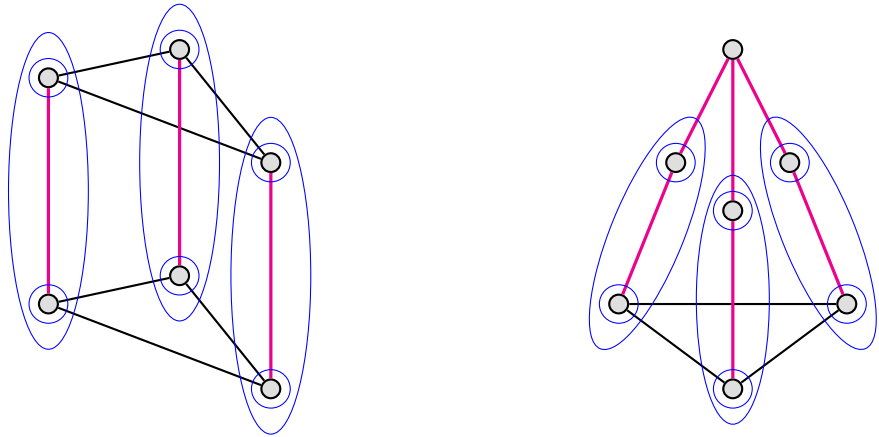

We claim that both the prism and the pyramid are non-$2$-graphs. In order to prove this, we exhibit a facet-defining inequality for $\cut{G}$ in minimum integer form and with a right-hand side of $4$, where $G$ is either the prism or the pyramid. Let $c \in \R^E$ be such that $c(e) := 1$ if edge $e$ belongs to a triangle and $c(e):=2$ otherwise, see Figure~\ref{fig:prism_pyramid} (cost-$1$ edges are those colored black and cost-$2$ edges magenta), and let $\lambda := \lambda^c(G) = 4$. Then $\inp{c}{x} \geqslant \lambda$ is obviously in minimum integer form. A laminar family $\mathcal F$ consisting of $|E| = 9$ subsets $S \subseteq V$ such that the cuts $\delta(S)$, $S \in \mathcal F$ are linearly independent\footnote{ We do not distinguish cuts from their characteristic vectors. We say that the cuts $\delta(S)$, $S \in \mathcal{F}$ are linearly independent if the vectors $\chi^{\delta(S)}$, $S \in \mathcal{F}$ are linearly independent.} minimum (proper) cuts is represented in Figure \ref{fig:prism_pyramid} (sets $S \in \mathcal{F}$ are those colored blue). Therefore, by Lemma~\ref{lem:facet}, $\inp{c}{x} \geqslant \lambda$ is a facet-defining inequality for $\cut{G}$. We conclude that $G$ is a non-$2$-graph.

Moreover, it can be checked that  $k^*(G \contract e) =2$ and $k^*(G \delete e)= 2$ for every edge $e$, where $G$ is either the prism or pyramid. Thus the prism and  pyramid  are both \emph{minor-minimal} non-$2$-graphs. (The minor-minimality of the prism and pyramid is not used in our proof of Theorem~\ref{thm:2-graphs} and actually follows from it.)

%%% thm:2-graphs /// MAIN THEOREM

\begin{thm}[Main Theorem] \label{thm:2-graphs}
The minor-minimal non-$2$-graphs are the prism and the pyramid.
\end{thm}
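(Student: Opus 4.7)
The forward direction---that the prism and pyramid are minor-minimal non-$2$-graphs---has essentially been established above: the cost vectors and laminar families pictured in Figure~\ref{fig:prism_pyramid} prove via Lemma~\ref{lem:facet} that both graphs are non-$2$-graphs, while the remark immediately following that figure verifies minor-minimality by checking directly that every proper edge contraction and edge deletion yields a $2$-graph. My plan therefore concentrates on the converse: every minor-minimal non-$2$-graph $G$ is the prism or the pyramid.

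I fix such a $G$ together with a facet-defining inequality $\inp{c}{x} \geqslant \lambda$ of $\cut{G}$ in minimum integer form. Since $k^*(G) \geqslant 3$, some facet satisfies $\lambda \geqslant 3$, and the Conforti--Rinaldi--Wolsey theorem cited above then forces $\lambda$ to be even, so in fact $\lambda \geqslant 4$. By Lemma~\ref{lem:facet} I fix a laminar family $\mathcal{F}$ of $|E^c|$ subsets of $V$ whose cuts are minimum with respect to $c$ and whose characteristic vectors on $E^c$ are linearly independent. Several normalizations follow from minor-minimality together with Lemmas~\ref{lem:deletion_contraction}--\ref{lem:minordom} and the remarks on loops and parallel edges: first, $G$ is loopless, since the coefficient of any loop in a facet must be zero, so deleting a loop yields a proper minor in which the same inequality is facet-defining with the same $\lambda$; second, $G$ has no parallel edges, by the remark that simplification preserves $k^*$ combined with minor-minimality; third, $E^c = E$, because an edge $e$ with $c(e) = 0$ could be deleted and the same inequality would remain facet-defining with the same $\lambda$ in the minor. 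In particular $|\mathcal{F}| = |E|$, which also yields the bound $|E| \leqslant 2|V| - 2$ standard for laminar families of proper non-empty subsets.

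The heart of the argument is the structural step. I would first establish $c(e) \leqslant 2$ for every edge: an edge of cost at least $3$ should admit a contraction or deletion that keeps the induced inequality facet-defining with right-hand side $\lambda$ in the minor, by tracking that enough members of $\mathcal{F}$ survive to certify this through Lemma~\ref{lem:facet} -- contradicting minor-minimality. Once $c \in \{1,2\}^E$ and $\lambda = 4$, each tight cut in $\mathcal{F}$ uses at most two cost-$2$ edges and therefore organizes $E_2 := c^{-1}(2)$ into a matching-like system; the leaves of the laminar tree of $\mathcal{F}$ correspond to vertices whose $c$-weighted degree equals $4$, while the internal nodes correspond to non-singleton tight cuts. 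Combining this skeleton with one more round of minor-minimality---every cost-$1$ edge must be contractible and every cost-$2$ edge deletable into a $2$-graph minor---forces a case analysis on the shape of the laminar tree and the placement of $E_1, E_2$ around it. The configurations that survive produce exactly the prism (three cost-$2$ edges forming a perfect matching between two cost-$1$ triangles) and the pyramid (three cost-$2$ paths hanging off a cost-$1$ triangle).

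The main obstacle is precisely this final case analysis: the laminar tree admits many \emph{a priori} shapes, and although the bound $c \in \{1, 2\}$ and minor-minimality rule most of them out, a systematic enumeration requires delicate local arguments around each edge. The decisive lever, I expect, is that for every $e \in E$ at least one of $G \contract e$ or $G \delete e$ must be a $2$-graph, so any facet of the minor has right-hand side at most $2$ in minimum integer form; applied together with how $\mathcal{F}$ restricts to the minor, this should pin the global graph down to exactly the two announced possibilities.
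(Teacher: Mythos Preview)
Your proposal identifies the right target but leaves the substantive content of the proof unfilled, and a few of the steps you do sketch are not correct as stated.

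First, the bound $c(e)\leqslant 2$ (in your normalization, i.e.\ $c(e)\leqslant \lambda/2$) cannot be obtained by ``contracting or deleting an edge of cost at least~$3$ and keeping the facet in the minor''. Deleting an edge with positive coefficient destroys the inequality, and contracting an edge only keeps the facet when at most one member of $\mathcal{F}$ crosses it---which is precisely what Lemma~\ref{lem:basic_two_cuts} rules out. The correct argument is the one in Lemma~\ref{lem:basic_cost_one}: if $c(e)>\lambda/2$ and $e$ lies in two nested sets $S\subsetneq T$ of the laminar family, then $c(\delta(T\smallsetminus S))<\lambda$, contradicting validity.

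Second, you silently pass from $\lambda\geqslant 4$ to $\lambda=4$ (equivalently, to $c\in\{1,2\}^E$). The Conforti--Rinaldi--Wolsey theorem only gives evenness; pinning $\lambda$ down to $4$ requires the half-integrality Lemma~\ref{lem:half_integral}, whose proof is nontrivial: it locates a level-$1$ set via Lemmas~\ref{lem:lev-0}--\ref{lem:level-1}, contracts the corresponding cost-$1$ edge to obtain a ridge of the minor, and analyses the convex combination of the two adjacent facets.

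Third, and most importantly, the paper does \emph{not} proceed by an unstructured case analysis on the shape of the laminar tree. The engine of the proof is the $2$-cutset Lemma~\ref{lem:2-cutset_bis}, which sharply constrains how $G$ can fail $3$-connectivity; this lemma is used to show (Lemma~\ref{lem:level-2}) that $\mathcal{F}$ may be taken with no level-$2$ sets at all, so the laminar tree has depth~$1$. The final step is then a discharging argument proving that every node is covered by a level-$1$ pair, after which the cost-$\tfrac12$ edges form a disjoint union of cycles and a short graph-theoretic argument exhibits a prism or pyramid minor. None of these ingredients---the $2$-cutset lemma, the elimination of level-$2$ sets, or the discharging---appears in your outline, and without them the promised ``case analysis'' has no visible route to termination.
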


\subsection{Subtour elimination polyhedron} \label{sec:intro-subtour}

A \emph{tour} of $G$ is a closed walk that visits every node at least once. Every tour $T$ can be encoded by means of an integer vector $\chi^T \in \Z^E_+$ by letting $\chi^T(e)$ count the number of times edge $e$ is traversed by tour $T$. The resulting set of vectors $X(G)$ is the set of vectors $x \in \Z^E_+$ with connected support, such that $x(\delta(v))$ is even for every node $v$. (As usual, $x(E'):=\sum_{e\in E'} x(e)$ for $E'\subseteq E$.)

The \emph{graphical traveling salesman polyhedron} is the polyhedron defined as
$$
\GTSP{G} := \conv \{\chi^T \in \Z^E_+ \mid T \text{ is a tour of } G\} = \conv (X(G))\,.
$$
This polyhedron is of dominant type because if $x \in X(G)$ then $x + 2 \chi^{\{e\}} \in X(G)$. 

The \emph{subtour elimination relaxation} of the graphical traveling salesman polyhedron is the polyhedron defined as
$$
\subtour{G} := \{x \in \R^E_+ \mid x(\delta(S))\geqslant 2,\;\varnothing\ne S\subsetneq V\}\,.
$$
Clearly $\GTSP{G} \subseteq \subtour{G}$. Fonlupt and Naddef \cite{FN92} prove the following:

%%% thm:Fon-Nadd

\begin{thm} \label{thm:Fon-Nadd}
We have $\GTSP{G}=\subtour{G}$ if and only if $G$ does not have a prism or pyramid or \Mone{} (see Figure~\ref{fig:M_1}) minor.
\end{thm}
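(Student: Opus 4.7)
The plan is to derive Theorem~\ref{thm:Fon-Nadd} from Theorem~\ref{thm:2-graphs} via blocking duality. Observe that $\subtour{G} = 2\cdot \mathrm{Bl}(\cut{G})$, where $\mathrm{Bl}$ denotes the blocker, so the vertices of $\subtour{G}$ are precisely the vectors $2c/\lambda$ as $\inp{c}{x}\geqslant \lambda$ ranges over the facet-defining inequalities of $\cut{G}$ in minimum integer form. Since $\GTSP{G}\subseteq\subtour{G}$, the identity $\GTSP{G}=\subtour{G}$ is equivalent to: every vertex of $\subtour{G}$ lies in $\GTSP{G}$. Noting that vertices of $\GTSP{G}$ are tours and therefore integral, any non-integer vertex of $\subtour{G}$ is an immediate witness that $\GTSP{G}\subsetneq\subtour{G}$.

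For the necessity direction, one checks that each of the prism, the pyramid and \Mone{} already fails $\GTSP{\cdot}=\subtour{\cdot}$ and that this failure transfers to any graph containing one of them as a minor. For the prism and the pyramid, the facet with $\lambda=4$ already exhibited in Figure~\ref{fig:prism_pyramid} yields the half-integer subtour-vertex $c/2$, which cannot lie in $\GTSP{\cdot}$ by the remark above; an analogous vertex, produced from a $\lambda=2$ facet, witnesses the failure for \Mone{}. That $\GTSP{G'}\subsetneq\subtour{G'}$ transfers from a minor $G'$ to $G$ along edge contractions and deletions is a short direct check paralleling Lemma~\ref{lem:deletion_contraction}, using that both polyhedra behave well under projection or restriction of the relevant variable.

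For the sufficiency direction, assume $G$ has no prism, pyramid or \Mone{} minor. By Theorem~\ref{thm:2-graphs}, $G$ is a 2-graph, and combining with the Conforti-Rinaldi-Wolsey theorem, every non-trivial facet of $\cut{G}$ in minimum integer form has $\lambda\in\{1,2\}$. The case $\lambda=1$ is immediate: the support is a spanning tree $T$ and the subtour-vertex $2\chi^T$ is the tour that traverses $T$ twice, hence in $\GTSP{G}$. The case $\lambda=2$ is the substantive part: the subtour-vertex is the integer vector $c$ itself, and one must show that $c$ is dominated by a convex combination of tours of $G$. The strategy I would pursue is to fix a laminar family $\mathcal F$ of tight minimum cuts provided by Lemma~\ref{lem:facet} and induct on $\|G\|$, using the absence of an \Mone{} minor to produce, along a well-chosen set in $\mathcal F$, a splitting of $c$ into smaller instances to which the induction hypothesis applies, then glue the resulting tours into a convex combination dominating $c$.

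The main obstacle will be precisely this last inductive step: translating ``no \Mone{} minor'' into a structural statement strong enough to integrate the fragments produced by cutting along $\mathcal F$ into genuine tours of $G$. Theorem~\ref{thm:2-graphs} has already delivered the global 2-graph structure from the exclusion of the prism and pyramid, so the role of \Mone{} is to rule out a specific parity/connectivity obstruction local to $\lambda=2$ facets; identifying this obstruction and showing it is the only one is where the delicate work lies. The payoff is that all $\lambda>2$ cases, which consume much of the argument in \cite{FN92}, are eliminated for free by the blocker reformulation, which is what should make the overall proof notably shorter than the original.
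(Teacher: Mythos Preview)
Your high-level strategy---derive Theorem~\ref{thm:Fon-Nadd} from Theorem~\ref{thm:2-graphs} via blocking duality, dispatch necessity by checking the three excluded graphs and invoking minor-monotonicity, and for sufficiency split the nontrivial facets of $\cut{G}$ according to $\lambda\in\{1,2\}$---is exactly the paper's route (this is the content of the proposition ``Theorem~\ref{thm:2-graphs} implies Theorem~\ref{thm:Fon-Nadd}'' in Section~\ref{sec:equivalence}). The $\lambda=1$ case is handled the same way.

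Where you diverge is the $\lambda=2$ case, and here you are making it much harder than it is. You propose an induction on $\|G\|$, cutting along a set of the laminar family and gluing tours, and you flag this as the ``main obstacle'' without supplying the key idea. The paper needs no induction. Working in the support graph $G^c$ (so $c(e)\in\{1,2\}$ for every edge), it asks whether there exist nodes $u,v$ joined by three internally disjoint paths. If yes and some path has length at most $2$, the edges on that short path lie in too few minimum cuts, contradicting facet-definingness. If yes and all three paths have length at least $3$, then $G^c$ contains a \Mone{} minor, which is excluded. If no such $u,v$ exist, then no two cycles of $G^c$ share an edge, so every block of $G^c$ is a cycle or a bridge; this forces $c(e)=1$ on cycle edges and $c(e)=2$ on bridges, hence $c(\delta(v))$ is even for every node $v$ and $c$ \emph{is} a tour vector outright---not merely dominated by a convex combination of tours.

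So there is no genuine error in your plan, but the ``delicate work'' you anticipate evaporates: the exclusion of \Mone{} forces the support graph to be a graph whose blocks are cycles, from which the conclusion is immediate in two lines.
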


\begin{figure}[ht]
\centering
\begin{tikzpicture}[inner sep = 2.5pt,thick, scale=1.5,rotate=90]
\tikzstyle{vtx}=[circle,draw,thick,fill=gray!25]
\node[vtx] (s) at (0,0) {};
\node[vtx] (t) at (0,3) {};
\node[vtx] (v1) at (-1,1) {};
\node[vtx] (v2) at (0,1) {};
\node[vtx] (v3) at (1,1) {};
\node[vtx] (w1) at (-1,2) {};
\node[vtx] (w2) at (0,2) {};
\node[vtx] (w3) at (1,2) {};
\draw (s) -- (v1) -- (w1) -- (t); % path P_1
\draw (s) -- (v2) -- (w2) -- (t); % path P_2
\draw (s) -- (v3) -- (w3) -- (t); % path P_3
\end{tikzpicture}
\caption{The \Mone{} graph, third forbidden minor for $\GTSP{P} = \subtour{P}$.}
\label{fig:M_1}
\end{figure}
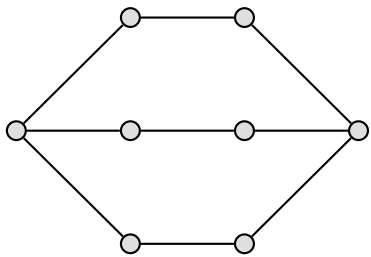

By blocking polarity \cite[Section 5.8]{SchrijverBook}, the vertices of the subtour elimination relaxation yield an irredundant description of the cut dominant:
$$
\cut{G} = \{x \in \R^E_+ \mid \inp{c}{x}\geqslant 2,\;c\mbox{ is a vertex of }\subtour{G}\}.
$$
(In the above description, facet-defining inequalities with positive right hand side $\lambda$ are normalized so that $\lambda = 2$.) Therefore the result of Fonlupt and Naddef shows that if $G$ does not contain one of the minors listed in Theorem~\ref{thm:Fon-Nadd}, then $G$ is a $2$-graph. However, as we prove, this still holds when $G$ does not have a prism or pyramid minor. In terms of the blocker,  this means that the subtour elimination relaxation is guaranteed to be integral, even when $G$ has a \Mone{} minor, but might have vertices that do not correspond to tours.

\subsection{Other related work} \label{sec:intro-other-related-work}

Dirac \cite{Dirac63} and Lov\'asz \cite{Lovasz65} have characterized the graphs that do not contain two node-disjoint cycles. Fumei Lam in her doctoral dissertation~\cite{Lam05} gives a proof of Theorem \ref{thm:Fon-Nadd} that uses this characterization. We can also prove Theorem \ref{thm:2-graphs} using the Dirac-Lov\'asz characterization. However, the proof involves an extensive case analysis (albeit a lot shorter than the one in \cite{Lam05}) and is less elegant than the proof presented here.

\subsection{Outline} \label{sec:intro-outline}

We start Section~\ref{sec:general} by giving basic properties of facet-defining inequalities of the cut dominant of $G$. In particular we give three remarks for dealing with loops, parallel edges and cutnodes in $G$. 

For the rest of the section and for the following one, we consider a minor-minimal non-$k$-graph $G$. Since $G$ is a non-$k$-graph, $\cut{G}$ has a \emph{witness}, that is, a facet-defining inequality $\inp{c}{x} \geqslant k$ for $\cut{G}$ such that its right hand side exceeds $k$ when it is scaled to its minimum integer form.\footnote{Although this definition depends on $k$, the value of $k$ will always be clear from the context.} Corresponding to this witness, $G$ has a laminar family $\mathcal{F}$ such that $\{\delta(S) \mid S \in \mathcal{F}\}$ is a basis of minimum cuts with respect to $c$. 

We give properties of $G$, witness $\inp{c}{x} \geqslant k$ and laminar family $\mathcal{F}$. Specifically, these properties include a characterization of the local structure of level-$0$ and level-$1$ sets in $\mathcal{F}$, where level-$0$ sets are the inclusionwise minimal sets in $\mathcal{F}$  and, for $i \in \Z_+$, level-$(i+1)$ sets are those for which $i$ is the maximum level of a properly contained set. 

We prove that level-$0$ sets are singletons and level-$1$ sets are pairs of nodes linked by an edge of cost $\frac{k}{2}$.

%{\color{red} In the case $k=2$, any facet-defining inequality $\inp{\tilde{c}}{x} \geqslant 2$ of $\cut{G}$ and $\tilde{c}$ fractional is a witness for $G$.}

In Section~\ref{sec:non-2-graphs}, we continue the analysis under the further assumption that $k = 2$. First, we prove that $c$ is half-integral, so that $G$ is a $4$-graph. Next, we prove that $G$ is essentially $3$-connected in the sense that if $\{u,v\}$ is a $2$-cutset, then $G - \{u,v\}$ has exactly two components and one of them is formed by a unique node that is adjacent to both $u$ and $v$, while $u$ and $v$ are not adjacent. Finally, we prove that $\mathcal{F}$ can be assumed to have no level-$2$ set. The last step in the proof of Theorem~\ref{thm:2-graphs} combines all these properties in a global argument to prove that $G$ has a prism or pyramid minor.

In Section~\ref{sec:equivalence}, we show that our main result (Theorem~\ref{thm:2-graphs}) quite directly implies the main result of Fonlupt and Naddef~\cite{FN92} (Theorem~\ref{thm:Fon-Nadd}). We prove also that, using our $2$-cutset lemma (Lemma~\ref{lem:2-cutset_bis}) as well as half-integrality of witnesses (Lemma~\ref{lem:half_integral}), the main result of Fonlupt and Naddef implies our main result.

Finally, concluding remarks are given in Section~\ref{sec:concluding_remarks}.

%%%%%%%%%%%%%%%%%%%%%%%%%%%%%%%%%%%%%%
%%%%%%%%% GENERAL PROPERTIES %%%%%%%%%
%%%%%%%%%%%%%%%%%%%%%%%%%%%%%%%%%%%%%%

\section{General properties} \label{sec:general}

\subsection{Properties of facet-defining inequalities} \label{sec:general-FDIs}

Consider a graph $G = (V,E)$. The first two remarks imply that if $G$ is the simple graph obtained from graph $G'$ by keeping exactly one edge from each set of parallel edges and deleting all loops, then $G$ satisfies $k^*(G) = k^*(G')$.
\begin{rem} \label{rem:loops}
Let $G$ be a graph, let $G'$ be the graph obtained from $G$ by adding a loop $f$. Suppose that
$$
\sum_{e \in E(G)} c^{i}(e) x(e) \geqslant \lambda_i,\,i\in I;\;\;x(e) \geqslant 0,\;e \in E(G)
$$
is an irredundant linear description of $\cut{G}$. Then the system 
$$
\sum_{e \in E(G')\atop e \neq f} c^{i}(e) x(e) \geqslant \lambda_i,\,i\in I;\;\;x(e) \geqslant 0,\;e \in E(G')
$$
is an irredundant linear description of $\cut{G'}$.
\end{rem}

\begin{rem} \label{rem:skeleton}
Let $G$ be a graph, let $f$ be an edge of $G$ and let $G'$ be the graph obtained from $G$ by adding an edge $f'$ parallel to $f$. Suppose that
$$
\sum_{e \in E(G)} c^{i}(e) x(e) \geqslant \lambda_i,\,i\in I;\;\;x(e) \geqslant 0,\;e \in E(G)
$$
is an irredundant linear description of $\cut{G}$. Then the system 
\begin{align*}
&c^{i}(f)x(f) + \sum_{e \in E(G') \atop e \neq f,f'} c^{i}(e)x(e) \geqslant \lambda_i,\,i\in I, c^i(f) > 0;\\
&c^{i}(f)x(f') + \sum_{e \in E(G') \atop e \neq f,f'} c^{i}(e)x(e) \geqslant \lambda_i,\,i\in I, c^i(f) > 0;\\
&\sum_{e \in E(G') \atop e \neq f,f'} c^{i}(e)x(e) \geqslant \lambda_i,\,i\in I, c^i(f) = 0;\\
&x(e) \geqslant 0,\;e \in E(G')
\end{align*}
is an irredundant linear description of $\cut{G'}$.
\end{rem} 

The next remark explains how the facets of the cut dominant of $G$ can be obtained from the facets of the cut dominant of each of its blocks, in case $G$ is not $2$-connected.

%%% rem:cutdom-cutnode

\begin{rem}\label{rem:cutdom-cutnode} 
Let $G$ be a graph that can be obtained from two disjoint graphs $G_1$, $G_2$ (with at least two nodes each), by selecting a node in $G_1$ and a node in $G_2$ and identifying them into a node $v$, so that $v$ is a cutnode of $G$. Moreover, let 
\begin{align*}
&\inp{c^{1,i}}{x^1} \geqslant k,\,i\in I;\;\;x^1(e) \geqslant 0,\;e \in E(G_1)\\
&\inp{c^{2,j}}{x^2} \geqslant k,\,j\in J;\;\;x^2(e) \geqslant 0,\;e \in E(G_2)
\end{align*}
be irredundant systems of inequalities describing $\cut{G_1}$ and $\cut{G_2}$ respectively, where $k > 0$ is arbitrary. The following system of inequalities provides an irredundant description of $\cut{G}$:
$$
\inp{c^{1,i}}{x^1} + \inp{c^{2,j}}{x^2} \geqslant k,\,i\in I,\,j \in J;\;\;x^1(e) \geqslant 0,\;e \in E(G_1);\;\;x^2(e) \geqslant 0,\;e \in E(G_2)\,.
$$
\end{rem}

When $G$ is obtained from disjoint graphs $G_1$ and $G_2$ by identifying two or more nodes, it appears to be significantly harder to derive an inequality description of $\cut{G}$ from the descriptions of $\cut{G_1}$ and $\cut{G_2}$.

Next, let $\inp{c}{x}\geqslant \lambda$ be a valid inequality for $\cut{G}$ with $\lambda = \lambda^c(G) > 0$. Let $G^c$ denote the graph obtained from $G$ by deleting all edges with $c(e) = 0$. Thus $G^c = (V,E^c)$. Lemma~\ref{lem:facet} shows that $\inp{c}{x} \geqslant \lambda$ is facet-defining for $\cut{G}$ if and only if it is facet-defining for $\cut{G^c}$. 

%From now on, we assume that $\inp{c}{x} \geqslant \lambda$ is facet-defining and that $G = G^c$. In particular, $G$ is connected. Let $\mathcal{F}$ be any family consisting of $|E|$ subsets of $V$ such that the cuts $\delta(S)$, $S \in \mathcal{F}$ are linearly independent minimum (proper) cuts. Notice that $\delta(S)$, $S \in \mathcal{F}$ form a \emph{basis} of $\R^E$ consisting of minimum cuts. 

The following lemma states basic properties that are used later, often without explicit reference.

\begin{lem}\label{lem:facet-basic} Let $G$ be a graph, $\inp{c}{x} \geqslant \lambda$ be a facet-defining inequality for $\cut{G}$ with $\lambda > 0$ and $E^c=E$, and let $\{\delta(S) \mid S \in \mathcal{F}\}$ be a basis of minimum cuts (with respect to $c$).

\begin{enumerate}[(i)]
\item For every $S \in \mathcal{F}$, the induced subgraphs $G[S]$ and $G[\overline{S}]$ are both connected.
\item Graph $G$ is simple, that is, $G$ contains no pair of parallel edges and no loops.
\item For every $e \in E$ there is at least one $S \in \mathcal F$ such that $e \in \delta(S)$.
\end{enumerate}
\end{lem}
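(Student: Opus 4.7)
The plan is to handle the three items in order, with (i) relying on a direct minimality argument and (ii)--(iii) following from the linear-independence condition built into Lemma~\ref{lem:facet}.

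For (i), I would argue by contradiction. Suppose $G[S]$ is disconnected, and partition $S = S_1 \sqcup S_2$ with $S_1, S_2$ nonempty and no edges between them in $G$. Because no edges cross between the two parts, $\delta(S)$ decomposes as the disjoint union $\delta(S_1) \sqcup \delta(S_2)$. Both $S_1$ and $S_2$ are proper subsets of $V$ (being nonempty subsets of $S \subsetneq V$), so $\delta(S_1)$ and $\delta(S_2)$ are proper cuts, each of $c$-weight at least $\lambda$. Summing yields $c(\delta(S)) \geqslant 2\lambda > \lambda$, contradicting that $\delta(S)$ is minimum. The argument for $G[\overline{S}]$ is symmetric, using $\delta(\overline{S}) = \delta(S)$.

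For parts (ii) and (iii), my approach is to exploit that, under the hypothesis $E^c = E$, Lemma~\ref{lem:facet} produces a family $\mathcal F$ of exactly $|E|$ sets whose cut vectors $\chi^{\delta(S)}$ are linearly independent in $\R^{E}$. Viewing these as rows of an $|E| \times |E|$ matrix indexed by $\mathcal F \times E$, the matrix is nonsingular. If some $e \in E$ belonged to no $\delta(S)$ with $S \in \mathcal F$, the corresponding column would vanish, placing the rows in the hyperplane $\{x \in \R^E : x(e) = 0\}$ and contradicting nonsingularity; this yields (iii). A loop $e$ belongs to no cut whatsoever, so the same zero-column argument rules out loops. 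Finally, parallel edges $e, e'$ satisfy $e \in \delta(S)$ if and only if $e' \in \delta(S)$ for every $S$, so the two columns are identical, forcing the rows into $\{x \in \R^E : x(e) = x(e')\}$ and again contradicting nonsingularity. This establishes (ii).

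I do not anticipate any substantive obstacle: the arguments are essentially formal once Lemma~\ref{lem:facet} is invoked. The only small point requiring care is in (i), where one must verify that each piece $S_i$ remains a proper subset of $V$ so that $\delta(S_i)$ is a proper cut and the bound $c(\delta(S_i)) \geqslant \lambda$ applies; beyond this, everything reduces to elementary linear algebra on the characteristic-vector matrix.
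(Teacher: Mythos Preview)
Your proposal is correct. For parts (i) and (iii) it matches the paper's argument (the paper merely states that (i) ``follows from the fact that $\delta(S)$ is a minimum cut'' and that (iii) follows because the cuts form a basis of $\R^E$; you have simply unpacked these). For part (ii) there is a small but genuine difference: the paper appeals to Remarks~\ref{rem:loops} and~\ref{rem:skeleton}, which describe explicitly how facet-defining inequalities of $\cut{G'}$ arise from those of $\cut{G}$ when a loop or a parallel edge is added, and in particular show that no facet-defining inequality with positive right-hand side can have full support if loops or parallel edges are present. Your argument instead reuses the nonsingular $|E|\times|E|$ matrix of cut vectors already available from Lemma~\ref{lem:facet}, observing that a loop forces a zero column and a pair of parallel edges forces two identical columns. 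Your route is more self-contained (it needs nothing beyond the basis already in hand), while the paper's route ties the statement to the structural Remarks that are used elsewhere; both are short and valid.
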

\begin{proof}
(i) follows from the fact that $\delta(S)$ is a minimum cut. (ii) follows from Remarks~\ref{rem:loops} and~\ref{rem:skeleton}. (iii) follows from the fact that cuts $\delta(S),\;S\in \mathcal F$ form a basis of $\R^E$. Indeed, if (iii) is not satisfied, then every cut $\delta(S)$, $S \in \mathcal{F}$ satisfies $x(e) = 0$, a contradiction.
\end{proof}

For an edge set $E' \subseteq E$, we let $c \contract E'$ denote the restriction of $c$ to $E(G \contract E')$. If $E' = \{e\}$ is a single edge, we also use the simpler notation $c \contract e$ to denote $c \contract \{e\}$. %Also, for any $c \in \R^{E}_+$, we denote by $\face{G}{c}$ the face of $\cut{G}$ defined by inequality $\inp{c}{x} \geqslant \lambda^c(G)$.

\begin{lem} \label{lem:contract}
Let $G$ be a graph, $\inp{c}{x} \geqslant \lambda$ be a facet-defining inequality for $\cut{G}$ with $\lambda > 0$ and $E^c=E$, and let $\{\delta(S) \mid S \in \mathcal{F}\}$ be a basis of minimum cuts. For every $E' \subseteq E$ such that $G \contract E'$ has at least two nodes, the inequality $\inp{c \contract E'}{x} \geqslant \lambda$ is valid for $\cut{G \contract E'}$. Moreover, the dimension of the face of $\cut{G \contract E'}$ this inequality defines is at least
\begin{equation*}
|\{S\in\mathcal{F} \mid \delta(S)\cap E'=\varnothing\}|-1\,.
\end{equation*}
\end{lem}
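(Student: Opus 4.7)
The plan is to address validity and the dimension bound separately, exploiting in both cases the natural identification $E(G \contract E') = E \setminus E'$ and the bijection between proper cuts $\delta_{G \contract E'}(T)$ of $G \contract E'$ and cuts $\delta_G(S)$ of $G$ with $\delta_G(S) \cap E' = \varnothing$, where $S$ is the union of the connected components of $(V, E')$ that make up $T$. Note that $G$ is necessarily connected here, for otherwise $\cut{G}$ would equal $\R^E_+$ and could carry no facet with $\lambda > 0$; in particular $G \contract E'$ is also connected, so its cut dominant has non-trivial facets.

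For validity, I would observe that whenever $S$ and $T$ are related as above,
\[
\inp{c \contract E'}{\chi^{\delta_{G \contract E'}(T)}} = \inp{c}{\chi^{\delta_G(S)}} \geq \lambda,
\]
since $\delta_G(S) \subseteq E \setminus E'$ and $c \contract E'$ is simply the restriction of $c$ to $E \setminus E'$. Validity on all of $\cut{G \contract E'}$ then follows from convex combinations together with the fact that $c \contract E' \geq 0$.

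For the dimension bound, set $\mathcal{F}' := \{S \in \mathcal{F} \mid \delta(S) \cap E' = \varnothing\}$. For each $S \in \mathcal{F}'$, the vector $\chi^{\delta_G(S)}$, read off on the coordinates in $E \setminus E'$, is the characteristic vector of a cut of $G \contract E'$ and it saturates $\inp{c \contract E'}{x} = \lambda$. Since $E^c = E$, Lemma \ref{lem:facet} gives that $\{\chi^{\delta(S)} \mid S \in \mathcal{F}\}$ is linearly independent in $\R^E$; the subfamily indexed by $\mathcal{F}'$ already vanishes on $E'$, so deleting the $E'$-coordinates preserves its linear independence in $\R^{E \setminus E'}$. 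Because $\lambda > 0$, the origin lies off the face, so these $|\mathcal{F}'|$ linearly independent points are also affinely independent, forcing the face they span inside $\cut{G \contract E'}$ to have dimension at least $|\mathcal{F}'| - 1$. I do not anticipate any real obstacle beyond careful bookkeeping: loops and parallel edges created by the contraction appear as coordinates of $\R^{E \setminus E'}$, but no loop ever belongs to a cut and, for $S \in \mathcal{F}'$, any such loop has both endpoints inside a single component of $(V, E')$ and hence receives value zero in $\chi^{\delta_G(S)}$, so these extra coordinates play no role in the independence argument.
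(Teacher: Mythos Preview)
Your proposal is correct and follows essentially the same route as the paper: validity via the cost-preserving correspondence between cuts of $G \contract E'$ and cuts of $G$ disjoint from $E'$, and the dimension bound via the surviving linearly independent minimum cuts in $\mathcal{F}$. Your write-up is in fact more careful than the paper's (the remarks on connectedness and on loops are sound bookkeeping the paper leaves implicit); one minor note is that the clause ``Because $\lambda > 0$, the origin lies off the face, so these \ldots\ linearly independent points are also affinely independent'' is superfluous, since linearly independent vectors are automatically affinely independent regardless of where the origin sits.
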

\begin{proof}
By hypothesis, $\lambda = \lambda^c(G)$. Since every cut in $G \contract E'$ yields a cut of the same cost in $G$, we have $\lambda^c(G) \leqslant \lambda^{c\contract E'}(G\contract E')$. Hence the inequality $\inp{c \contract E'}{x} \geqslant \lambda$ is valid for $\cut{G \contract E'}$. 

Recall that $\{\delta(S) \mid S \in \mathcal{F}\}$ is a basis of minimum cuts. Since  $\{\delta(S)\mid S \in \mathcal{F},\,\delta(S) \cap E'=\varnothing\}$ is a set of linearly independent cuts in $G \contract E'$ which satisfy the inequality $\inp{c \contract E'}{x} \geqslant \lambda$ with equality, the cardinality of such a set is a lower bound on the number of affinely independent points on the face defined by $\inp{c \contract E'}{x} \geqslant \lambda$.
\end{proof}

\medskip
\subsection{Properties of minor-minimal non-$k$-graphs} \label{sec:general-mm_non-k-graphs}

Here we assume $k \geqslant 1$ and consider any minor-mini\-mal non-$k$-graph $G = (V,E)$. Such a graph has a witness, i.e. a facet-defining inequality of the form  $\inp{c}{x} \geqslant k$ such that the right hand side of its minimum integer form is strictly greater than $k$. Below, we establish properties of $G$ and its cut dominant that follow from the minimality of $G$. %Note that $E^c=E$ since $G$ is a minor-minimal non-$k$-graph. 
Recall that $c(E') := \sum_{e \in E'} c(e)$ for $E' \subseteq E$ and $c \in \mathbb{R}^{E}$.

%%% lem:basic_two_cuts

\begin{lem} \label{lem:basic_two_cuts}  Let $G$ be a minor-minimal non-$k$-graph, $\inp{c}{x} \geqslant k$ be a witness for $G$ and let $\{\delta(S) \mid S \in \mathcal{F}\}$ be a basis of minimum cuts. Every edge of $G$ belongs to at least two cuts $\delta(S)$, $S \in \mathcal{F}$.
\end{lem}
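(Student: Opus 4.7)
My plan is a proof by contradiction: suppose some edge $e\in E$ lies in at most one of the cuts $\delta(S)$, $S\in\mathcal{F}$, and produce from this a proper minor of $G$ that is still a non-$k$-graph. A preliminary clean-up is to note that minor-minimality forces $E^c=E$: otherwise $G^c=(V,E^c)$, obtained from $G$ by deleting the edges of $c$-weight $0$, is a proper minor of $G$ on which $\inp{c}{x}\geqslant k$ remains a witness (it is still facet-defining for $\cut{G^c}$ with the same minimum-integer-form right hand side), contradicting minor-minimality. Granted $E^c=E$, Lemma~\ref{lem:facet-basic}(iii) ensures that $e$ lies in at least one cut of $\mathcal{F}$, so in fact $e$ lies in \emph{exactly} one cut, say $\delta(S_0)$ with $S_0\in\mathcal{F}$.

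The offending minor will be $G\contract e$, which is a proper minor of $G$ because $e$ is not a loop by Lemma~\ref{lem:facet-basic}(ii). Applying Lemma~\ref{lem:contract} with $E'=\{e\}$ to the witness $\inp{c}{x}\geqslant k$ shows that $\inp{c\contract e}{x}\geqslant k$ is valid for $\cut{G\contract e}$ and defines a proper nonempty face of dimension at least $|\mathcal{F}|-1-1=|E|-2$. Since $\cut{G\contract e}$ is full-dimensional in $\R^{E\setminus\{e\}}$ and so has dimension $|E|-1$, this face must be a facet of $\cut{G\contract e}$.

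The hard part, and the step I expect to require the most care, is verifying that this new facet still witnesses $G\contract e$ as a non-$k$-graph: an extra common factor in the gcd could in principle absorb the gap between $k$ and the minimum-integer-form right hand side of the original witness. I would rewrite the witness in minimum integer form $\inp{c^*}{x}\geqslant k^*$ (with $c^*$ integer, relatively prime, and $k^*>k$), let $d$ be the gcd of the entries of $c^*$ on $E\setminus\{e\}$, and aim to show $d=1$; the minimum integer form of the contracted facet is $\inp{c^*\contract e/d}{x}\geqslant k^*/d$, so $d=1$ gives what we want. Two divisibility facts extracted from $\mathcal{F}$ do the job: first, each cut $\delta(S)$ with $S\in\mathcal{F}\setminus\{S_0\}$ avoids $e$ and has $c^*$-cost $k^*$, giving $d\mid k^*$; second, $\delta(S_0)$ contains $e$ and has $c^*$-cost $k^*$, giving $d\mid k^*-c^*(e)$. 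Combining, $d\mid c^*(e)$, and since $d$ already divides every $c^*(f)$ with $f\neq e$, we conclude $d\mid\gcd(c^*)=1$. Thus the contracted facet has right hand side $k^*>k$ in minimum integer form, so $G\contract e$ is a non-$k$-graph, contradicting the minor-minimality of $G$.
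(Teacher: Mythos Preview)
Your proof is correct and follows essentially the same approach as the paper: contract the offending edge $e$, use Lemma~\ref{lem:contract} to see that $\inp{c\contract e}{x}\geqslant k$ is facet-defining for $\cut{G\contract e}$, and then verify via a gcd argument (using that $c^*(e)=k^*-c^*(\delta(S_0)\smallsetminus e)$) that the contracted inequality is still a witness. Your write-up is in fact slightly more explicit than the paper's in two places---you spell out why the face of $\cut{G\contract e}$ is a facet, and your gcd argument uses the cuts in $\mathcal{F}\setminus\{S_0\}$ directly to get $d\mid k^*$ rather than appealing to the integrality of the right-hand side in minimum integer form---but the underlying idea is identical.
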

\begin{proof} 
Because $k\geqslant 1$, the minimality of $G$ implies $|V| \geqslant 3$. Suppose that there exists an edge $e$ that belongs to a unique cut $\delta(S)$, $S \in \mathcal F$. By Lemma~\ref{lem:contract}, $\inp{c \contract e}{x} \geqslant k$ is a facet-defining inequality for $G\contract e$. Let us show that the inequality $\inp{c \contract e}{x} \geqslant k$ is a witness for $G \contract e$. Indeed, if $\lambda > k$ is such that $\inp{\frac{\lambda}{k}c}{x} \geqslant \lambda$ is the minimum integer form of witness $\inp{c}{x} \geqslant k$ for $G$ then $\inp{\frac{\lambda}{k}(c \contract e)}{x} \geqslant \lambda$ is the minimum integer form of inequality $\inp{c \contract e}{x} \geqslant k$. This is due to the fact that $c(e) = k - c(\delta(S) \smallsetminus e)$, thus any common divisor of $\frac{\lambda}{k}(c \contract e)$ and $\lambda$ also divides $\frac{\lambda}{k}c(e)$. Since $G \contract e$ has a witness, it is a non-$k$-graph, contradicting the minimality of $G$.
\end{proof}

%%% lem:basic_cost_one

\begin{lem} \label{lem:basic_cost_one}
 Let $G$ be a minor-minimal non-$k$-graph and $\inp{c}{x} \geqslant k$ be a witness for $G$. Then $c(e) \leqslant \frac{k}{2}$ for every edge $e$.
\end{lem}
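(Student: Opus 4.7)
The plan is to argue by contradiction via an uncrossing of two minimum cuts that both contain $e$. Suppose some edge $e = uv$ satisfies $c(e) > k/2$. By Lemma~\ref{lem:basic_two_cuts}, $e$ belongs to at least two cuts $\delta(S_1)$ and $\delta(S_2)$ with $S_1, S_2 \in \mathcal{F}$, and these cuts are distinct as edge sets because they are members of a linearly independent basis. Since $\delta(S_i) = \delta(V \smallsetminus S_i)$, I may first replace each $S_i$ by its complement if needed to arrange $u \in S_1 \cap S_2$, which automatically places $v$ in $V \smallsetminus (S_1 \cup S_2)$.

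The heart of the argument is then to uncross $S_1$ and $S_2$: set $S := S_1 \triangle S_2$ and apply the identity $\delta(S_1) \triangle \delta(S_2) = \delta(S_1 \triangle S_2)$ together with inclusion--exclusion to obtain
$$
c(\delta(S)) \;=\; c(\delta(S_1)) + c(\delta(S_2)) - 2\, c(\delta(S_1) \cap \delta(S_2)).
$$
Since each $\delta(S_i)$ is a minimum cut, the first two terms sum to $2k$; since $c \geqslant 0$ and $e \in \delta(S_1) \cap \delta(S_2)$, the subtracted term is at least $c(e) > k/2$. Hence $c(\delta(S)) < k$. Note that the endpoint placement above also implies $u, v \notin S$, so $e \notin \delta(S)$, which is consistent with the inequality but not needed for the contradiction.

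Because $\lambda^c(G) = k$, no proper cut of $G$ can have cost strictly less than $k$, so $\delta(S)$ cannot be a proper cut. This forces $S \in \{\varnothing, V\}$, whence $\delta(S_1) = \delta(S_2)$, contradicting the distinctness of the two basis cuts and completing the proof. I do not expect a serious analytic obstacle here: the only points requiring care are recognizing that the cuts supplied by Lemma~\ref{lem:basic_two_cuts} are distinct as edge sets (this follows from linear independence of $\{\delta(S) \mid S \in \mathcal{F}\}$), that each $S_i$ may be freely complemented without affecting the basis (since $\chi^{\delta(S)} = \chi^{\delta(V \smallsetminus S)}$), and that $\lambda^c(G) = k$ is precisely what rules out the strictly cheaper proper cut produced by uncrossing.
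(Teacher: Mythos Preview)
Your proof is correct and follows essentially the same uncrossing idea as the paper: obtain two minimum cuts containing $e$ via Lemma~\ref{lem:basic_two_cuts}, combine them to produce a cut of cost $\leqslant 2k - 2c(e) < k$, and derive a contradiction. The only difference is that the paper first invokes laminarity of $\mathcal{F}$ to assume $S \subsetneq T$ and then considers $T \smallsetminus S$, whereas you use the symmetric-difference identity $\delta(S_1) \triangle \delta(S_2) = \delta(S_1 \triangle S_2)$ directly; since $T \smallsetminus S = S \triangle T$ when $S \subsetneq T$, the two computations coincide, and your version has the minor advantage of not needing the laminar hypothesis at all.
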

\begin{proof}
Suppose $c(e) > \frac{k}{2}$. Let $\mathcal{F}$ be a laminar family such that $\{\delta(S) \mid S \in \mathcal{F}\}$ is a basis of minimum cuts. By Lemma~\ref{lem:basic_two_cuts}, there are two different sets $S, T \in \mathcal{F}$ such that $e \in \delta(S)$, $\delta(T)$. Because $\mathcal{F}$ is a laminar family, without loss of generality we can assume $S \subsetneq T$. Then
\begin{equation*}
c\big(\delta(T \smallsetminus S)\big) \leqslant c\big(\delta(S)) + c(\delta(T)\big) - 2c(e) = 2k - 2c(e) < k
\end{equation*}
hence the inequality $\inp{c}{x} \geqslant k$ is not valid for $\cut{G}$, a contradiction.
\end{proof}

We associate a \emph{level} in $\Z_+$ for each set of the laminar family $\mathcal{F}$ through the function $\level:\mathcal{F}\rightarrow \Z_+$ (recursively) defined by:
\begin{equation*}
\level(S):=
\begin{cases}
0 & \text{if }S\text{ is an inclusionwise minimal set in }\mathcal{F},\\
\max_{T\subsetneq S, T \in \mathcal{F} } \level(T) +1 &\text{otherwise}\,.
\end{cases}
\end{equation*}

\begin{lem}\label{lem:lev-0} 
Let $G$ be a minor-minimal non-$k$-graph, $\inp{c}{x} \geqslant k$ be a witness for $G$ and $\mathcal{F}$ be a laminar family such that $\{\delta(S) \mid S \in \mathcal{F}\}$ is a basis of minimum cuts. Every level-$0$ set is a singleton.
\end{lem}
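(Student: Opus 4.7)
The plan is to argue by contradiction, exploiting the laminar structure of $\mathcal{F}$. Suppose that $S \in \mathcal{F}$ is a level-$0$ set with $|S| \geqslant 2$. I will exhibit an edge of $G$ that lies in no cut of the basis $\{\delta(T) \mid T \in \mathcal{F}\}$, directly contradicting Lemma~\ref{lem:basic_two_cuts}.

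First, I produce such an edge: by Lemma~\ref{lem:facet-basic}(i), the induced subgraph $G[S]$ is connected, and since $|S|\geqslant 2$ this forces the existence of an edge $e \in E(G)$ with both endpoints in $S$.

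Next, I verify that $e \notin \delta(T)$ for every $T \in \mathcal{F}$. Fix such a $T$. By laminarity of $\mathcal{F}$, either $T$ and $S$ are disjoint, or one of them contains the other. The level-$0$ assumption on $S$ rules out $T \subsetneq S$. In each of the three remaining cases ($T \cap S = \varnothing$, $T = S$, or $T \supsetneq S$) both endpoints of $e$ lie on the same side of the bipartition $(T, \overline{T})$, whence $e \notin \delta(T)$. Therefore $e$ lies in zero basis cuts, contradicting Lemma~\ref{lem:basic_two_cuts}, which guarantees that every edge of $G$ belongs to at least two of them.

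I do not anticipate any genuine obstacle: once Lemma~\ref{lem:basic_two_cuts} is in hand, the argument is essentially forced by the interaction between the laminarity of $\mathcal{F}$ and the inclusionwise minimality of $S$ within it.
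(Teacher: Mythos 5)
Your proof is correct and follows essentially the same route as the paper: take an edge inside $S$ (which exists by connectedness of $G[S]$ if $|S|\geqslant 2$), and use laminarity plus the level-$0$ assumption to show it lies in no cut $\delta(T)$, $T\in\mathcal{F}$, contradicting the fact that these cuts span $\R^E$. The only cosmetic difference is that you invoke Lemma~\ref{lem:basic_two_cuts} (at least two cuts) where the paper only needs the weaker Lemma~\ref{lem:facet-basic}(iii) (at least one cut).
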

\begin{proof}
Consider a level-$0$ set $S\in\mathcal{F}$. Because $\delta(S)$ is a minimum cut, the induced subgraph $G[S]$ is connected. On the other hand, every edge that $G[S]$ might have should belong to at least one cut $\delta(T)$, $T \in \mathcal F$. By laminarity of $\mathcal{F}$, such a cut should satisfy $T \subsetneq S$, thus contradicting the fact that $S$ is level-$0$. We conclude that $|S|=1$.
\end{proof}

Given disjoint subsets $S$, $T$ of $V$, we denote by $\delta(S:T)$ the set of edges with one endpoint in $S$ and the other in $T$. As usual, if $T = \{v\}$ is a singleton, we write $\delta(S:v)$ to mean $\delta(S:\{v\})$.

%%% lem:level-1

\begin{lem} \label{lem:level-1}  
Let $G$ be a minor-minimal non-$k$-graph, $\inp{c}{x} \geqslant k$ be a witness for $G$ and $\mathcal{F}$ be a laminar family such that $\{\delta(S) \mid S \in \mathcal{F}\}$ is a basis of minimum cuts. Every level-$1$ set in $\mathcal{F}$ is of the form $\{u,v\}$, where $\{u\}, \{v\} \in \mathcal{F}$ and $uv \in E(G)$. Moreover, $c(uv)=c(\delta(u)\smallsetminus uv)=c(\delta(v)\smallsetminus uv)=\frac{k}{2}$.
\end{lem}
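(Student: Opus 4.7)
For the first stage I would let $\{u_1\},\dots,\{u_p\}\in\mathcal{F}$ be the level-$0$ singletons of $\mathcal{F}$ properly contained in $S$ (with $p\geq 1$ since $S$ is level-$1$), and show that $S=\{u_1,\dots,u_p\}$. Suppose for contradiction that some $w\in S$ has $\{w\}\notin\mathcal{F}$. Since $G[S]$ is connected by Lemma~\ref{lem:facet-basic}(i) and $|S|\geq 2$, there is an edge $e\in E(G[S])$ incident to $w$. By Lemma~\ref{lem:facet-basic}(iii), some $T\in\mathcal{F}$ satisfies $e\in\delta(T)$; since both endpoints of $e$ belong to $S$, laminarity of $\mathcal{F}$ forces $T\subsetneq S$, and by Lemma~\ref{lem:lev-0} such $T$ must be a level-$0$ singleton $\{u_i\}$ with $u_i$ an endpoint of $e$. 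Since $\{w\}\notin\mathcal{F}$ we have $u_i\neq w$, and $\{u_i\}$ is therefore the unique cut of $\mathcal{F}$ through $e$, contradicting Lemma~\ref{lem:basic_two_cuts}.

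For the second stage I would take the inner product of $c$ with the vector identity $\sum_{i=1}^p \chi^{\delta(u_i)}=\chi^{\delta(S)}+2\chi^{E(G[S])}$ (each edge of $G[S]$ contributes to two star cuts, each edge of $\delta(S)$ to exactly one) and use $c(\delta(u_i))=c(\delta(S))=k$ to obtain $c(E(G[S]))=(p-1)k/2$.

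The heart of the proof is the third stage, showing $p=2$, which I would do by contradiction with the minor-minimality of $G$. Assuming $p\geq 3$, either $G[S]$ is a tree (in which case the cost identity combined with Lemma~\ref{lem:basic_cost_one} forces every edge of $G[S]$ to have cost exactly $k/2$) or $G[S]$ contains a cycle (so that $|E(G[S])|\geq p$). In the cycle case, applying Lemma~\ref{lem:contract} with $E'=E(G[S])$ yields a face of $\cut{G\contract E(G[S])}$ of dimension at least $|\mathcal{F}|-p-1\geq |E(G\contract E(G[S]))|-1$, hence a facet, so $\inp{c\contract E(G[S])}{x}\geq k$ is facet-defining on $\cut{G\contract E(G[S])}$. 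In the tree case, contracting any single edge $e\in E(G[S])$ yields a new vertex $v$ with $(c\contract e)(\delta(v))=2k-2c(e)=k$, so $\{v\}$ is a minimum cut in the contracted graph; combined via Lemma~\ref{lem:contract} with the $|\mathcal{F}|-2$ cuts of $\mathcal{F}$ whose boundary avoids $e$, this yields enough linearly independent minimum cuts to make $\inp{c\contract e}{x}\geq k$ facet-defining on $\cut{G\contract e}$. In both cases, verifying that the greatest common divisor of the restricted coefficient vector does not increase enough to pull the minimum integer right-hand side down to $k$ would show that the contracted graph is itself a non-$k$-graph, contradicting minor-minimality. Once $p=2$, writing $S=\{u,v\}$: connectivity of $G[S]$ yields $uv\in E(G)$, the cost identity gives $c(uv)=k/2$, and $c(\delta(u))=c(uv)+c(\delta(u)\smallsetminus uv)=k$ gives $c(\delta(u)\smallsetminus uv)=k/2$, and symmetrically for $v$.

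The main obstacle is the third stage. Two technical checks are required: in the tree case one must verify that the new star cut $\chi^{\delta(v)}$ is linearly independent of the surviving cuts of $\mathcal{F}$ (needed to raise the face dimension to a full facet), and in both cases one must control the greatest common divisor of the restricted coefficient vector, since a large gcd could collapse the minimum integer right-hand side down to $k$ or below and thereby invalidate the witness property on the contracted graph.
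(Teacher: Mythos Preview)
Your first two stages are correct and essentially match the paper's opening: showing every node of $S$ appears as a singleton in $\mathcal{F}$, and deriving the total-cost identity $c(E(G[S]))=(p-1)k/2$. Your derivation of the identity via $\sum_i\chi^{\delta(u_i)}=\chi^{\delta(S)}+2\chi^{E(G[S])}$ is a clean shortcut for what the paper obtains from a degree-$1$-node claim.

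The third stage, however, has genuine gaps---precisely the two you flag---and the paper's argument takes a different route that sidesteps them. Two comments:

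\textbf{Tree case.} Your plan is to contract one edge $e$ of $G[S]$, add the new star cut $\delta(v_{\mathrm{new}})$, and argue facet-definingness. The linear-independence obstacle is real: whether $\chi^{\delta(\{u,v\})}$ lies in the span of $\{\chi^{\delta(T)}:T\in\mathcal{F},\,T\neq\{u\},\{v\}\}$ depends on the global structure of $\mathcal{F}$, not just on $S$, and there is no local reason it should fail. The paper avoids this entirely: from the degree-$1$-node claim it deduces $G[S]$ is a \emph{path} $v_1\cdots v_t$, and then for $t\geq 3$ uses the identity $\chi^{\delta(\{v_1,v_2,v_3\})}=\chi^{\delta(v_1)}-\chi^{\delta(v_2)}+\chi^{\delta(v_3)}$ to swap $\{v_3\}$ for $\{v_1,v_2,v_3\}$ in $\mathcal{F}$, producing a laminar family in which $v_2v_3$ lies in only one cut, contradicting Lemma~\ref{lem:basic_two_cuts}. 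No contraction, no gcd bookkeeping.

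\textbf{Cycle case.} Your contraction of all of $E(G[S])$ does yield a facet, but the gcd control cannot be completed without more structure. You need each $c(e)$ with $e\in E(G[S])$ to be a $\Z$-linear combination of $k$ and the outside costs. The paper obtains this by first proving $G[S]$ is exactly an \emph{odd} cycle (another swap argument eliminates degree-$1$ nodes in $G[S]$; oddness is forced because the $p$ star cuts $\delta_{G[S]}(u_i)$ must be linearly independent in $\R^{E(G[S])}$, which fails for bipartite $G[S]$). Only then does the $p\times p$ incidence system $y(v_{i-1}v_i)+y(v_iv_{i+1})=k-c(\delta(v_i:\overline{S}))$ have a unique solution, and that solution is explicitly a \emph{subset sum} of outside costs, which is what makes the integrality transfer work. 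Your proposal contracts before establishing this structure, so the gcd step cannot be finished as stated.

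In short, your contraction-based strategy could be pushed through, but doing so would require reproducing the path/odd-cycle reductions anyway; the paper's family-swap technique (modifying $\mathcal{F}$ to violate Lemma~\ref{lem:basic_two_cuts}) is the missing ingredient, and once you have it the tree case needs no contraction at all.
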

\begin{proof}
Let $S$ be a level-$1$ set in $\mathcal{F}$. Because $\level(S) > 0$, we have $|S| \geqslant 2$.

Because $G[S]$ is connected, $||G[S]|| \geqslant |S|-1$. 

Let $\mathcal{F}_S := \{R\in \mathcal{F} \mid R\subsetneq S\}$. Each $R \in \mathcal{F}_S$ has level $0$ and is thus a singleton, by Lemma~\ref{lem:lev-0}. Hence $|\mathcal {F}_S| \leqslant |S|$. In fact, we have $|\mathcal {F}_S| = |S|$ since otherwise $G[S]$ would have an edge $uv$ with $\{u\} \notin \mathcal{F}_S$. Such an edge would be contained in at most one cut $\delta(T)$, $T \in \mathcal{F}$, in contradiction with Lemma~\ref{lem:basic_two_cuts}. Hence for every node $v \in S$, we have $\{v\} \in \mathcal{F}$.

Because $\{\delta(T) \mid T\in\mathcal{F}\}$ is a basis of minimum cuts of $G$, family $\mathcal{F}$ contains $||G[S]||$ sets $R$ such that $\delta_{G[S]}(R) = \delta(R) \cap E(G[S])$ form a basis of cuts of $G[S]$. Every such set $R$ satisfies $\delta(R)\cap E(G[S]) \neq \varnothing$, which by laminarity of $\mathcal{F}$ implies $R\subsetneq S$, that is, $R \in \mathcal{F}_S$. Thus $|\mathcal{F}_S| \geqslant ||G[S]||$.

Therefore,
\begin{equation} \label{eq:FS-bounds}
|S|-1 \leqslant ||G[S]|| \leqslant |\mathcal{F}_S| = |S|\,.
\end{equation}
Either $||G[S]||=|S|-1$ and $G[S]$ is a tree, or $||G[S]||=|S|$ and $G[S]$ is a connected graph containing exactly one cycle. Because the cuts $\delta_{G[S]}(R)$, $R \in \mathcal{F}_S$ are linearly independent, this cycle is odd.

\begin{claim*} \label{claim:degree-1_in_S}
If $G[S]$ has a node $v$ with a unique neighbor in $G[S]$, say $u$, then 
\begin{equation} \label{eq:degree-1_in_S}
c(vu)=c(\delta(v:\overline{S}))=c(\delta((S\smallsetminus v) : \overline{S}))=\frac{k}{2}\,.
\end{equation}
\end{claim*}
\begin{proof}
Since $S, \{v\} \in \mathcal{F}$ and $\lambda^c(G) = k$ we have
\begin{align*}
&c(v:\overline{S})+c(\delta((S\smallsetminus v) : \overline{S}))=c(\delta(S))=k\\
&c(vu)+c(\delta(v:\overline{S}))=c(\delta(v))=k\quad \text{and}\\
&c(vu)+c(\delta((S\smallsetminus v) : \overline{S}))=c(\delta(S\smallsetminus v))\geqslant k\,.
\end{align*}
By Lemma~\ref{lem:basic_cost_one}, $c(vu) \leqslant \frac{k}{2}$. This implies \eqref{eq:degree-1_in_S}.
\end{proof}

%In order to prove the second part, suppose $S \neq \{u,v\}$. Since $c(vu) = \frac{k}{2}$ and $c(\delta(v)) = c(\delta(u)) = k$, we conclude that $\delta(\{v,u\})$ is a minimum cut distinct from all the cuts $\delta(T)$, $T \in \mathcal{F}$. Moreover, since the cuts $\delta(T)$, $T \in \mathcal{F}$ form a basis, there exist coefficients $\lambda(T)$, $T \in \mathcal{F}$ such that
%%
%\begin{equation} \label{eq:express_uv}
%\chi^{\{vu\}} = \sum_{T \in \mathcal{F}} \lambda(T) \chi^{\delta(T)}\,.
%\end{equation}
%%
%Moreover,
%%
%\begin{equation} \label{eq:swap}
%{\color{red} 2}\chi^{\{vu\}} + \chi^{\delta(\{v,u\})} = \chi^{\delta(v)} + \chi^{\delta(u)}\,.
%\end{equation}
%%
%{\color{red} Kostya: What if $\lambda(\{u\}) = \lambda(\{v\}) = 1/2?$}
%Because in \eqref{eq:express_uv} the only two cuts containing $vu$ are those for $T = \{v\}$ or $T = \{u\}$, we have $\lambda(\{v\}) + \lambda(\{u\}) = 1$. Thus, $\lambda(\{v\})$ and $\lambda(\{u\})$ cannot both equal $1$ at the same time. Without loss of generality, we may assume that $\lambda(\{u\}) \neq 1$. By \eqref{eq:express_uv} and \eqref{eq:swap}, we can remove $\{u\}$ from $\mathcal{F}$ and replace it with $\{u,v\}$ while preserving the linear independence of the associated cuts. However, the new family $\mathcal{F}$ is laminar but does not satisfy Lemma~\ref{lem:basic_two_cuts} for $e:=vu$, contradicting the minimality of $G$.

We proceed with the proof of the lemma and distinguish two cases:\medskip

\noindent {\it Case  1:}    \emph{$G[S]$ is a tree.} 
Then $G[S]$ contains at least two leaves, say $v_1$ and $v_t$ where $t := |S|$. Let $v_{2}$, $v_{t-1}$ be their unique neighbors in $G[S]$. By the claim, $c(v_1v_2)=c(v_{t-1}v_t)=\frac{k}{2}$ and moreover $c(\delta(v_1:\overline{S})) = c(\delta(v_t:\overline{S})) = \frac{k}{2}$. Since $c(\delta(S)) = k$, this implies $\delta(S)=\delta(v_1:\overline{S})\cup \delta(v_t:\overline{S})$. This shows that  $G[S]$  contains exactly two leaves. That is, $G[S]$ is a path $P = v_1v_2 \cdots v_{t-1}v_t$. Since $\{v\} \in \mathcal{F}$ for all nodes $v \in S$, we have $c(e) = \frac{k}{2}$ for every $e \in E(P)$.

The lemma holds if $t = 2$. By contradiction, assume $t \geqslant 3$ and notice that 
\begin{equation} \label{eq:P_dependance}
\chi^{\delta(\{v_1,v_2,v_3\})} = \chi^{\delta(v_1)} - \chi^{\delta(v_2)} + \chi^{\delta(v_3)}\,.
\end{equation}
In particular, $t \geqslant 4$ since otherwise $t = 3$, $S = \{v_1,v_2,v_3\}$ and \eqref{eq:P_dependance} contradicts the linear independence of the cuts $\delta(T)$, $T \in \mathcal{F}$. Now, because of \eqref{eq:P_dependance}, we can modify $\mathcal{F}$ by adding $\{v_1,v_2,v_3\}$ and removing $\{v_3\}$ while keeping a laminar family defining a basis of minimum cuts. The new family $\mathcal{F}$ violates Lemma~\ref{lem:basic_two_cuts} because the edge $v_2v_3$ is contained in only one of the cuts it defines, a contradiction.\medskip

\noindent   {\it Case  2:}  \emph{$G[S]$ contains a unique cycle, which is odd.}
We first show that $G[S]$ cannot contain a degree-$1$ node. Assume not and let $v$ be a node of $G[S]$ having $u$ as unique neighbor in $G[S]$. Since $c(vu)=\frac{k}{2}$ by the claim and  $c(\delta(v))=c(\delta(u))=k$, we get that $c(\delta(\{v,u\})) = k$, thus $\{v,u\}$ also defines a minimum cut in $G$. 

We decompose the vector space $\R^E$ into two subspaces $W_1$ and $W_2$ where $W_1 := \{x \in \R^E \mid x(e) = 0,\;e \in E(G[S])\}$ and $W_2 := W_1^\perp = \{x \in \R^E \mid x(e) = 0,\;e \in E(G) \smallsetminus E(G[S])\}$. Clearly, $\dim(W_1) = ||G|| - ||G[S]||$, thus $\dim(W_2) = ||G[S]||$. Furthermore, $W_1$ contains all cuts $\delta(T)$, $T \in \mathcal{F} \smallsetminus \mathcal{F}_S$. Because $|\mathcal{F}| - |\mathcal{F}_S| = ||G|| - ||G[S]||$, these cuts form a basis of $W_1$. Therefore, the remaining cuts $\{\delta(T)\mid T \in \mathcal{F}_S\}$ complete the cuts $\{\delta(T)\mid T \in \mathcal{F} \smallsetminus \mathcal{F}_S\}$ to a basis of $\R^E$ if and only if the `projected' cuts $\delta_{G[S]}(T) = \delta(T) \cap E(G[S])$, $T \in \mathcal{F}_S$ form a basis of $W_2$. Now notice that, since $u$ is the unique neighbor of $v$ in $G[S]$,
$$
\chi^{\delta_{G[S]}(u)} = \chi^{\delta_{G[S]}(\{v,u\})} + \chi^{\delta_{G[S]}(v)}\,.
$$
Because of this, we can modify $\mathcal{F}$ similarly as in the previous case: we add set $\{u,v\}$ and remove singleton $\{u\}$. This again yields a laminar family defining a basis of minimum cuts. The new family $\mathcal{F}$ violates Lemma~\ref{lem:basic_two_cuts} because the edge $vu$ is contained in only one of the cuts it defines, a contradiction. Hence, $G[S]$ does not contain a degree-$1$ node.

\medskip

Thus $G[S]$ is an odd cycle $C$. Since $|E(C)|=|S|$, by Lemma~\ref{lem:contract}, the inequality $\inp{c \contract E(C)}{x} \geqslant k$ is facet-defining for $G \contract E(C)$. Since $G \contract E(C)$ is a proper minor of $G$, there is $\lambda\in\Z_+$, $\lambda\leqslant k$ such that $\frac{\lambda}{k}c(e) \in \Z_+$ for each $e \in E(G) \smallsetminus E(C)$.  

Assume $C = v_1 v_2 \cdots v_t v_1$ and consider the following system in $\R^{E(C)}$
\begin{equation} \label{eqn:oddcycle}
y(v_{i-1}v_{i})+y(v_{i}v_{i+1}) = k-c(\delta(v_i:\overline{S})) \quad \text{for} \quad i=1,\ldots,t
\end{equation}
where indices are taken modulo $t$. Since $\sum_{i=1}^t c(\delta(v_i:\overline{S})) = c(\delta(S)) = k$, one can check that the following is the unique solution of \eqref{eqn:oddcycle}:
$$
y(v_{i}v_{i+1}) = \sum_{j=i+2,i+4,\dots, i+t-1}c(\delta(v_j:\overline{S})) \quad \text{for} \quad i=1,\ldots,t
$$
where, again, indices are taken modulo $t$. Because this solution is unique, we have $c(e)=y(e)$ for $e\in E(C)$. Thus, $\frac{\lambda}{k}c(e) \in \Z_+$ for each $e \in E(C)$, since $\frac{\lambda}{k}c(e) \in \Z_+$ for each $e \in E(G) \smallsetminus E(C)$. This shows that $\frac{\lambda}{k} c$ is an integral vector, contradicting the assumption that $\inp{c}{x}\geqslant k$ is a witness for $G$.
\end{proof}

%%% rem:leverl1exist

\begin{rem} \label{rem:level1exist} 
Let $G$ be a minor-minimal non-$k$-graph, $\inp{c}{x} \geqslant k$ be a witness for $G$ and $\mathcal{F}$ be a laminar family such that $\{\delta(S) \mid S \in \mathcal{F}\}$ is a basis of minimum cuts. Moreover, assume that $k \geqslant 2$. Then $\mathcal{F}$ contains at least one level-$1$ set.
\end{rem}
\begin{proof}
By contradiction, assume that all sets in $\mathcal{F}$ are level-$0$. Pick one node $w$ such that $\{w\} \in \mathcal{F}$, remove $\{w\}$ from $\mathcal{F}$ and replace it with $\overline{\{w\}} = V \smallsetminus \{w\}$. The new collection $\mathcal{F}$ contains a level-$1$ set, namely~$\overline{\{w\}}$. Now apply Lemma~\ref{lem:level-1} to the new collection $\mathcal{F}$ and conclude that $G$ has three mutually adjacent nodes $u$, $v$ and $w$ and $c(uv) = c(vw) = c(wu) = \frac{k}{2}$. Therefore, $G = K_3$, $k = 1$ and $c(e) = \frac{1}{2}$ for all edges $e$. This contradicts the assumption $k \geqslant 2$.
\end{proof}

%%%%%%%%%%%%%%%%%%%%%%%%%%%%%%%%%%%%%%%%%%%%%%%%%%%
%%%%%%%%%%%%%%%%NON-2-GRAPHS PROPERTIES%%%%%%%%%%%%
%%%%%%%%%%%%%%%%%%%%%%%%%%%%%%%%%%%%%%%%%%%%%%%%%%%

\section{Properties of minor-minimal non-$2$-graphs} \label{sec:non-2-graphs}
 
Here we take $k = 2$ and establish further properties of minor-minimal non-$2$-graphs and their cut dominants. We conclude the section with a proof of Theorem~\ref{thm:2-graphs}, which gives a complete characterization of minor-minimal non-$2$-graphs.
 
\subsection{Half-integrality of witnesses}
  
\begin{lem} \label{lem:half_integral}
Let $G$ be a minor-minimal non-$2$-graph and $\inp{c}{x} \geqslant 2$ be a witness for $G$. Then $c(e) \in \{\frac{1}{2},1\}$ for every $e \in E$.
\end{lem}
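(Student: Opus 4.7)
My plan is to show that the scaling $c' := (\lambda/2)c$ of the witness to minimum integer form has right-hand side $\lambda = 4$; by Lemma~\ref{lem:basic_cost_one} this is equivalent to $c(e) \in \{1/2, 1\}$. By the Conforti-Rinaldi-Wolsey theorem and the fact that $G$ is not a $2$-graph, $\lambda$ is even and $\lambda \geq 4$. Assume for a contradiction that $\lambda \geq 6$. By Remark~\ref{rem:level1exist} there is a level-$1$ pair $\{u, v\} \in \mathcal{F}$, and by Lemma~\ref{lem:level-1}, $c(uv) = 1$, so $c'(uv) = \lambda/2$. Contract $e^* := uv$: by minor-minimality, $G \contract e^*$ is a $2$-graph, so every facet of $\cut{G \contract e^*}$ in minimum integer form has right-hand side at most $2$. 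The inequality $\inp{c \contract e^*}{x} \geq 2$ is valid for $\cut{G \contract e^*}$, and by Lemma~\ref{lem:contract} the face $F$ it defines has dimension at least $|E| - 3$: by laminarity of $\mathcal{F}$ and $\{u, v\} \in \mathcal{F}$, only $\{u\}$ and $\{v\}$ contain $e^*$ in their cut.

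Case 1: $F$ is a facet of $\cut{G \contract e^*}$. Let $d^* := \gcd$ of the entries of $c' \contract e^*$. The minimum integer form of $\inp{c \contract e^*}{x} \geq 2$ has right-hand side $\lambda/d^* \leq 2$, so $d^* \geq \lambda/2 \geq 3$. On the other hand, $d^*$ divides every positive entry of $c' \contract e^*$, each of which is at most $\lambda/2$ by Lemma~\ref{lem:basic_cost_one}; hence $d^* \leq \lambda/2$. So $d^* = \lambda/2$ and every entry of $c' \contract e^*$ equals $\lambda/2$; combined with $c'(e^*) = \lambda/2$, this gives $\gcd(c') = \lambda/2 \geq 3$, contradicting $\gcd(c') = 1$ in the minimum integer form of the witness.

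Case 2: $F$ is not a facet. Then $\dim F = |E| - 3$ and $F$ is the intersection of at least two facet-defining inequalities $\inp{c^i}{x} \geq \lambda^i$ of $\cut{G \contract e^*}$ (in minimum integer form), each with $\lambda^i \in \{1, 2\}$. Writing the resulting cone decomposition
$$
(c \contract e^*,\, 2) \;=\; \sum_i \alpha_i\,(c^i, \lambda^i) \;+\; \sum_{e \neq e^*} \beta_e\,(\chi^{\{e\}}, 0),\qquad \alpha_i,\,\beta_e \geq 0,
$$
we get $2 = \sum_i \alpha_i \lambda^i$ and $c(e) = \sum_i \alpha_i c^i(e) + \beta_e$. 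By the Conforti-Rinaldi-Wolsey theorem each $c^i$ with $\lambda^i = 1$ is a spanning-tree indicator, while each $c^i$ with $\lambda^i = 2$ has integer entries in $\{0, 1, 2\}$. A case analysis on the pair $(\lambda^1, \lambda^2)$, combined with the rigid structure at the level-$1$ pair $\{u, v\}$, should force the $\alpha_i$ to be half-integral, so that $c(e) \in \tfrac{1}{2}\Z$ for every $e$, and then Lemma~\ref{lem:basic_cost_one} gives $c(e) \in \{1/2, 1\}$, contradicting $\lambda \geq 6$.

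The main obstacle is Case 2, namely proving half-integrality of the combination coefficients $\alpha_i$. This requires combining spanning-tree rigidity (from Conforti-Rinaldi-Wolsey) with the constraints forced by the level-$1$ structure at $\{u, v\}$. If the direct case analysis on $(\lambda^1, \lambda^2) \in \{(1,1),(1,2),(2,2)\}$ proves unwieldy, a possible alternative is to contract a slightly larger edge set around $\{u, v\}$ so as to force $F$ to be a genuine facet, thereby collapsing everything to Case 1.
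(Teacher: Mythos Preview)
Your setup is exactly the paper's: contract the cost-$1$ edge $e^* = uv$ of a level-$1$ pair and use that the induced face $F$ of $\cut{G \contract e^*}$ has codimension at most $2$. Your Case~1 is correct and coincides with the paper's (implicit) reason why $F$ is not a facet but a ridge.

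The gap is Case~2, which you yourself flag as the obstacle. Two points. First, the $\beta_e$ terms and the case split on $(\lambda^1,\lambda^2)$ are unnecessary detours: by Lemma~\ref{lem:basic_two_cuts} every edge $e \neq e^*$ lies in some $\delta(S)$ with $S \in \mathcal{F} \smallsetminus \{\{u\},\{v\}\}$, so the ridge $F$ is contained in no nonnegativity facet; hence $F$ lies in exactly two facets, both with positive right-hand side, and after normalizing each to right-hand side $2$ one simply has $c \contract e^* = \alpha' c' + \alpha'' c''$ with $\alpha'+\alpha''=1$ and $c',c''$ integral (by minor-minimality). Second---and this is the real missing idea---you have no mechanism to pin down $\alpha',\alpha''$. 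The paper's device is to look at the single integer $c'(\delta(u)\smallsetminus e^*)$. Since $\{u,v\}\in\mathcal{F}$, the cut $\delta(\{u,v\})$ lies on $F$, so $c'(\delta(u)\smallsetminus e^*)+c'(\delta(v)\smallsetminus e^*)=2$ and thus $c'(\delta(u)\smallsetminus e^*)\in\{0,1,2\}$. If it equalled $1$, lift $c'$ back to an integral $\tilde c\in\Z^{E(G)}$ by setting $\tilde c(e^*):=1$; then $\tilde c(\delta(S))=2$ for \emph{every} $S\in\mathcal{F}$ (the two sets $\{u\},\{v\}$ give $1+1=2$), and since these cuts form a basis of $\R^E$ one gets $\tilde c=c$, contradicting non-integrality of the witness. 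Hence $c'(\delta(u)\smallsetminus e^*),\,c''(\delta(u)\smallsetminus e^*)\in\{0,2\}$, and together with $\alpha' c'(\delta(u)\smallsetminus e^*)+\alpha'' c''(\delta(u)\smallsetminus e^*)=c(\delta(u)\smallsetminus e^*)=1$ this forces $\alpha'=\alpha''=\tfrac12$. Neither the Conforti--Rinaldi--Wolsey spanning-tree structure nor your fallback of contracting a larger edge set supplies a comparable handle on the coefficients.
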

\begin{proof}
By Lemma~\ref{lem:basic_cost_one}, it suffices to show that $c(e) \in \frac{1}{2}\Z_+$ for every $e \in E$. As before, let $\mathcal{F}$ be a laminar family such that $\{\delta(S) \mid S \in \mathcal{F}\}$ is a basis of minimum cuts. By Remark~\ref{rem:level1exist}, $\mathcal{F}$ contains a level-$1$ set. By Lemma~\ref{lem:level-1}, this set is of the form $\{u,v\}$ with $\{u\},\,\{v\}\in \mathcal{F}$ and $c(uv) = 1$.

From Lemma~\ref{lem:contract} and from the minimality of $G$, we infer that the inequality $\inp{c \contract uv}{x} \geqslant2$ defines a ridge\footnote{A \emph{ridge} of polyhedron $P$ is a face of dimension $\dim(P) - 2$.} of $\cut{G \contract uv}$. Thus $\cut{G \contract uv}$ has two facet-defining inequalities $\inp{c' }{x} \geqslant 2$, $\inp{c''}{x} \geqslant 2$, such that $c \contract uv$ is a convex combination of $c'$ and $c''$, i.e.:
\begin{equation}\label{eq:half-int-cc}
c\contract uv=\alpha' c'+\alpha'' c''\,,
\end{equation}
for some $\alpha', \alpha'' \in \R_+$ with $\alpha'+\alpha''= 1$. By minimality of $G$, both $c'$ and $c''$ are integral.

Note that we have
\begin{equation}\label{eq:half-int-small-facets}
c'(\delta(v)\smallsetminus uv)+c'(\delta(u)\smallsetminus uv)=c''(\delta(v)\smallsetminus uv)+c''(\delta(u)\smallsetminus uv)=c(\delta(\{u,v\}))=2
\end{equation}
and since, by Lemma \ref{lem:level-1}, $c(uv)=1$,
\begin{equation}\label{eq:half-int-big-facet}
c(\delta(v)\smallsetminus uv)=c(\delta(u)\smallsetminus uv)=1\,.
\end{equation}

Let us show that neither $c'(\delta(u)\smallsetminus uv)$ nor $c''(\delta(u)\smallsetminus uv)$ equals $1$. Suppose that, without loss of generality,  $c'(\delta(u)\smallsetminus uv)=1$ and consider the integral vector $\tilde{c} \in \R^{E}$ defined as
\begin{equation*}
\tilde{c}(e):=\begin{cases}
1& \text{ if } e=uv\\
c'(e)& \text{ otherwise}\,.
\end{cases}
\end{equation*}
It can be checked that $\tilde{c}(\delta(S))=c'(\delta(S))$ and $c'(\delta(S))=2$ for every $S\in\mathcal{F}$ which does not separate $u$ and $v$. Note that in $\mathcal{F}$ there are precisely two sets separating $u$ and $v$, namely $\{u\}$ and $\{v\}$, and note that  $\tilde{c}(\delta(u))=1+c'(\delta(u)\smallsetminus uv)=2$ and consequently by~\eqref{eq:half-int-small-facets} we have $\tilde{c}(\delta(v))=1+c'(\delta(v)\smallsetminus uv)=2$. Thus, $\tilde{c}(\delta(S))=c(\delta(S))$ for every $S \in \mathcal{F}$. Since $\{\delta(S) \mid S \in \mathcal{F}\}$ is a basis, we obtain that $c(e)=\tilde{c}(e)$, and $c$ is an integral vector, a contradiction.

Since $c'$, $c''$ are integral vectors, we have that $c'(\delta(u)\smallsetminus uv)=0$ or $2$ and $c''(\delta(u)\smallsetminus uv) = 0$ or $2$. Then, from~\eqref{eq:half-int-cc}--\eqref{eq:half-int-big-facet} it follows that $\alpha' = \alpha'' = \frac{1}{2}$. Thus, \eqref{eq:half-int-cc} and $c(uv)=1$ imply that $c(e) \in \frac{1}{2} \Z_+$ for every $e \in E$.
\end{proof}

Let $G$ be a minor-minimal non-2-graph and $\inp{c}{x} \geqslant 2$ be a facet-defining inequality of $\cut{G}$ where $c$ is not integral. By Lemma \ref{lem:half_integral}, $\inp{2c}{x} \geqslant 4$ is the minimum integer form of such an inequality. Therefore we have the following:

\begin{rem}
Every minor-minimal non-$2$-graph is a $4$-graph.
\end{rem}

%%%%%%%%%%%%%%CONNECTIVITY%%%%%%%%%%%%%%%%%%%%%%
\subsection{Connectivity}

The next remark follows from Remark~\ref{rem:cutdom-cutnode}.

\begin{rem} \label{rem:2-connected_bis}
Every minor-minimal non-2-graph is $2$-connected.
\end{rem}

\begin{lem}[$2$-cutset lemma] \label{lem:2-cutset_bis} Let $G = (V,E)$ be a minor-minimal non-2-graph and $\inp{c}{x} \geqslant 2$ be a witness for $G$. If $G$ has a $2$-cutset $\{u,v\}$ then:
\begin{enumerate}[(i)]
\item $G-\{u,v\}$ has exactly two connected components, one of which contains a single node, say $w$.
\item $u$ and $v$ are not adjacent while $w$ is adjacent to both $u$ and $v$ and $c(uw)=c(vw)=1$.
\item not both $\delta(u)$ and $\delta(v)$ are minimum cuts.
\end{enumerate}
\end{lem}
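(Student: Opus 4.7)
Let $V_1, \ldots, V_p$ denote the vertex sets of the connected components of $G - \{u,v\}$. By Remark~\ref{rem:2-connected_bis}, $G$ is $2$-connected, hence $p \geqslant 2$ and each $V_i$ has at least one edge to each of $u$ and $v$ (otherwise removing $u$ or $v$ alone would disconnect $G$). Fix a laminar family $\mathcal F$ with $\{\delta(S) \mid S \in \mathcal F\}$ a basis of minimum cuts. Throughout the argument I will use that $c(e) \in \{\frac{1}{2},1\}$ by Lemma~\ref{lem:half_integral} and $c(e) \leqslant 1$ by Lemma~\ref{lem:basic_cost_one}. The overall strategy is: (i) and the no-edge part of (ii) are obtained by contraction arguments exploiting minor-minimality; the cost equalities in (ii) follow from direct bounds; and (iii) is obtained by an arithmetic count followed by a basis-exchange in $\mathcal F$.

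For part~(i), suppose some $V_i$ satisfies $|V_i| \geqslant 2$ and let $T$ be a spanning tree of $G[V_i]$. The proper minor $G \contract T$ admits the valid inequality $\inp{c \contract T}{x} \geqslant 2$ by Lemma~\ref{lem:contract}. The goal is to show that this inequality is actually facet-defining: after a suitable basis-exchange in $\mathcal F$ (pushing a superset of $V_i$ into the family), laminarity forces the sets $S \in \mathcal F$ with $\delta(S) \cap T = \varnothing$ to have cardinality $|E(G)| - |T| = |E(G \contract T)|$, so the face has codimension~$1$. Since $G \contract T$ is a proper minor of the minor-minimal $G$, it is a $2$-graph, so the contracted inequality is in minimum integer form, meaning $c$ is integral on $E(G) \smallsetminus T$. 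Carrying this out for every $V_j$ with $|V_j| \geqslant 2$ and using that the corresponding edge sets are pairwise disjoint forces $c$ to be integral everywhere unless at most one component is large, contradicting the non-integrality of the witness $c$. A further contraction of $uw$ for a singleton component $\{w\}$, combined with the same argument applied to the resulting 2-cutset, rules out $p \geqslant 3$.

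For part~(ii), once $V_2 = \{w\}$, adjacency of $w$ to both $u$ and $v$ follows from $2$-connectedness; then $c(\delta(w)) = c(uw) + c(vw) \geqslant 2$ combined with $c(uw), c(vw) \leqslant 1$ gives $c(uw) = c(vw) = 1$. To exclude $uv \in E$, assume $uv \in E$: then $\{u,v,w\}$ is a triangle of cost-$1$ edges, and the proper minor $G \contract uv$ collapses $uw$ and $vw$ into a pair of parallel edges. Applying Remark~\ref{rem:skeleton} to discard one copy yields a graph in which $\inp{c \contract uv}{x} \geqslant 2$ remains facet-defining with a surviving half-integer coefficient on the $V_1$-side, contradicting minor-minimality. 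For part~(iii), assume both $\delta(u)$ and $\delta(v)$ are minimum cuts; summing over components yields
\[
\sum_{i=1}^{p} c(\delta(V_i)) \;=\; c(\delta(u)\smallsetminus uv) + c(\delta(v)\smallsetminus uv) \;=\; 4-2c(uv)\,,
\]
and since each left-hand term is $\geqslant 2$ this forces $p=2$ and $c(uv)=0$. Combining with (i)--(ii) gives $V_2 = \{w\}$ and $c(\delta(V_1)) = c(\delta(w)) = 2$; the identity $\chi^{\delta(u)}+\chi^{\delta(v)} = \chi^{\delta(w)}+\chi^{\delta(V_1)}$ (which uses $uv \notin E$) then exhibits a linear dependence among four minimum cuts. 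Using this dependence to exchange three of them into $\mathcal F$ yields a new laminar basis in which some edge (incident to $u$ or $v$) lies in exactly one set, contradicting Lemma~\ref{lem:basic_two_cuts}.

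The main obstacle is the dimension-counting in~(i) and~(ii): verifying that each contracted inequality is facet-defining rather than merely valid in the minor. This requires a controlled basis-exchange within $\mathcal F$ so that the sets crossing the contracted region are properly accounted for under laminarity, and this is where the technical heart of the proof lies.
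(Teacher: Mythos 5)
Your proposal is a strategy sketch rather than a proof, and the step you yourself flag as ``the technical heart'' --- showing that the inequalities induced on proper minors are facet-defining, not merely valid --- is precisely what is missing, and it is where all the work in the paper's argument lies. Lemma~\ref{lem:contract} only gives a dimension lower bound equal to the number of basis cuts avoiding the contracted edges, and when you contract a component (or the edge $uv$) you may destroy \emph{all} the $u$--$v$ separating cuts of $\mathcal{F}$, so no counting ``forced by laminarity'' gives codimension $1$; an actual exchange argument is needed. The paper supplies it in three steps: first it shows $\mathcal{F}$ must contain a $u$--$v$ cut $\delta(S^*)$ and that every $u$--$v$ cut in $\mathcal{F}$ can be exchanged so as to agree with $\delta(S^*)$ on one of the two sides (property~\eqref{eq:st-cuts-property}); second it works not with the contraction of a component but with the minors $G'_i$ obtained from each side $G_i$ by adding a $u$--$v$ edge of cost $c(\delta(S^*)\cap E_{3-i})$, which retains the crossing information; third, the rank identity $\rk(M'_1)+\rk(M'_2)=|E'_1|+|E'_2|-1$ shows that \emph{exactly one} of the two induced inequalities is facet-defining while the other only defines a ridge. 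This facet/ridge dichotomy is what drives every subsequent conclusion (non-adjacency of $u$ and $v$, the singleton component via the auxiliary graph $G''$, and part~(iii), where the three extra minimum cuts $\delta_{G'_1}(u)$, $\delta_{G'_1}(v)$, $\delta_{G'_1}(\{u,v\})$ lift the ridge to a facet and force integrality). Your plan implicitly assumes the opposite of this dichotomy: you want the contracted inequality to be facet-defining for \emph{every} large component simultaneously, which is exactly what the rank identity rules out for the natural bases, so the gap is not a routine verification you postponed but the crux of the lemma.

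There are also concrete local problems. Contracting only a spanning tree $T$ of $G[V_i]$ leaves the non-tree edges of $G[V_i]$ as loops of positive cost (the witness has full support), and an inequality with a positive coefficient on a loop can never be facet-defining for a cut dominant, so as stated your contraction in part~(i) cannot work when $G[V_i]$ contains a cycle; you would at least have to contract all of $E(G[V_i])$. In part~(ii) the facet-ness of $\inp{c\contract uv}{x}\geqslant 2$ is again unjustified (contracting $uv$ kills every $u$--$v$ cut in the basis), whereas the paper's non-adjacency argument lives inside $G'_1$ and exploits the ridge structure by merging the real edge $uv$ with the virtual edge $e'_2$. In part~(iii) the arithmetic forcing $p=2$, $c(uv)=0$ and the dependence $\chi^{\delta(u)}+\chi^{\delta(v)}=\chi^{\delta(w)}+\chi^{\delta(V_1)}$ are correct, but the final claim that one can ``exchange three of them into $\mathcal{F}$'' and obtain a basis violating Lemma~\ref{lem:basic_two_cuts} is asserted without any argument for why such an exchange exists or produces an edge covered only once; the paper instead derives (iii) directly from the dichotomy by showing $\inp{c'_1}{x}\geqslant 2$ would become facet-defining, making $c$ integral. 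So the easy consequences (the cost equalities $c(uw)=c(vw)=1$, the count in (iii)) are fine, but the structural core of the lemma is not proved.
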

\begin{proof}
Let $G_1 = (V_1,E_1)$ and $G_2 = (V_2,E_2)$ denote two subgraphs of $G$, each containing at least three nodes, such that
\begin{equation} \label{eq:2-separation}
V_1 \cap V_2 = \{u,v\}, \quad V_1 \cup V_2 = V,\quad
E_1 \cap E_2 = \varnothing \quad \text{and} \quad E_1 \cup E_2 = E\,.
\end{equation}
By Remark \ref{rem:2-connected_bis}, $G_1$ and $G_2$ are both connected.

Let $\mathcal{F}$ be \emph{any} family such that $\{\delta(S) \mid S \in \mathcal{F}\}$ is a basis of minimum cuts. We first show that $\mathcal{F}$ contains a set $S$ such that $\delta(S)$ is a \emph{$u$--$v$ cut}, that is, a cut separating $u$ and $v$. Indeed, otherwise each cut $\delta(S)$, $S\in\mathcal{F}$ is contained in $E_1$ or $E_2$, and hence by Lemma~\ref{lem:contract}, $\inp{c \contract E_i}{x} \geqslant 2$ is a facet-defining inequality for $\cut{G \contract E_i}$, $i = 1, 2$. By minor-minimality of $G$, both vectors $c  \contract E_i$, $i = 1, 2$ are integral, hence $c$ is integral as well, a contradiction.

Fix a $u$--$v$ cut $\delta(S^{*})$ with $S^{*}\in\mathcal{F}$. We show that every $u$--$v$ cut $\delta(S)$, $S \in \mathcal{F}$ may be assumed to satisfy:
\begin{equation}\label{eq:st-cuts-property}
  \delta(S) \cap E_1 = \delta (S^{*}) \cap E_1 \qquad \text{or} \qquad \delta(S) \cap E_2 = \delta(S^{*}) \cap E_2\,.
\end{equation}
Indeed, $\delta(S)$, $S \subseteq V$ is a minimum $u$--$v$ cut in $G$ if and only if $\delta(S) \cap E_i$ is a minimum $u$--$v$ cut in $G_i$ for each $i = 1, 2$. Thus, for every $S\in\mathcal{F}$ the cuts
\begin{equation}\label{eq:two-new-cuts}
\big(\delta(S) \cap E_1\big) \cup \big(\delta(S^{*}) \cap E_2\big) \qquad \text{and} \qquad \big(\delta(S^{*}) \cap E_1\big) \cup \big(\delta(S) \cap E_2\big)
\end{equation}
are minimum $u$--$v$ cuts in $G$ and both these cuts satisfy \eqref{eq:st-cuts-property}. Moreover, the characteristic vectors of the cuts in \eqref{eq:two-new-cuts} sum up to $\chi^{\delta(S)}+\chi^{\delta(S^{*})}$. Hence,  if \eqref{eq:st-cuts-property} does not hold for some $S \in \mathcal{F}$, then $S$ may be removed from $\mathcal{F}$ and replaced by one of the node sets corresponding to the cuts in \eqref{eq:two-new-cuts}, while maintaining linear independence. (We point out that this proof does not use or assume laminarity of $\mathcal{F}$.) Therefore, every $u$--$v$ cut $\delta(S)$, $S \in \mathcal{F}$ may be assumed to satisfy~\eqref{eq:st-cuts-property}.

For $i = 1, 2$, let $G'_i = (V_i,E'_i)$ be the graph obtained by adding a new edge $e'_{3-i}$ with endnodes $u$ and $v$ to $G_i$. By Remark~\ref{rem:2-connected_bis}, each $G'_i$ is a minor of $G$. In fact, because $|V_i| < |V|$, each $G'_i$ is a proper minor of $G$. Notice that $G'_1$ or $G'_2$ may have one pair of parallel edges, but not both. Define $c'_i \in \R^{E'_i}$, $i = 1, 2$ as:

\begin{equation} \label{eq:def_c_prime}
c'_i(e) :=
\begin{cases}
c(\delta(S^*) \cap E_{3-i}) &\text{if } e = e'_{3-i},\\
c(e) &\text{if } e \in E_i\,.
\end{cases}
\end{equation}

By construction, for $i = 1, 2$, the inequality $\inp{c'_i}{x} \geqslant 2$ is valid for $\cut{G'_i}$. We claim that for exactly one index $i \in \{1,2\}$ the inequality $\inp{c'_i}{x} \geqslant 2$ defines a facet of $\cut{G'_i}$.

In order to prove this, consider the square non-singular matrix $M$ whose rows are the (characteristic vectors of the) cuts $\delta(S)$, $S \in \mathcal{F}$. For $i = 1, 2$, let $M_i$ be the submatrix of $M$ induced by the rows corresponding to the cuts whose intersection with $E_{3-i}$ is either empty or equal to $\delta(S^*) \cap E_{3-i}$. Then $M_1$ and $M_2$ are two row-induced submatrices of $M$, thus they have full row-rank. By \eqref{eq:st-cuts-property}, they have exactly one common row, namely, that corresponding to the fixed $u$--$v$ cut $\delta(S^*)$. Hence,
\begin{equation} \label{eq:2-cutset_dim}
\rk(M_1) + \rk(M_2) = \rk(M) + 1 = |E| + 1 = |E_1| + |E_2| + 1\,.
\end{equation}

For $i=1,2$ let $\xi_{3-i}$ denote the 0/1 column vector that has one entry per row of $M_i$, a $1$ at the entries corresponding to $u$--$v$ cuts and a $0$ at the other entries. Notice that the column of $M_i$ associated with any edge $e \in E_{3-i}$ is $\xi_{3-i}$ if $e \in \delta(S^*)$ and  the zero vector otherwise. Let $M'_i$ be the matrix obtained from $M_i$ by removing all columns for edges $e \in E_{3-i}$ and appending a single copy of column $\xi_{3-i}$, indexed by edge $e'_{3-i}$. Notice that the rows of $M'_i$ are characteristic vectors of cuts of $G'_i$. By~\eqref{eq:def_c_prime}, these cuts are minimum with respect to $c'_i$.

Since  $\delta(S^*) \cap E_{3-i}$ is nonempty by Remark~\ref{rem:2-connected_bis}, $\xi_{3-i}$ is a column of $M_i$. Hence $M'_i$ is obtained from $M_i$ by deleting columns that are either $0$ or repeated copies of $\xi_{3-i}$. This shows that $\rk(M_i) = \rk(M'_i)$. Since $E'_i=E_i\cup\{e'_{3-i}\}$, by \eqref{eq:2-cutset_dim} we have that:
\begin{equation} \label{eq:2-cutset_dim M'}
\rk(M'_1) + \rk(M'_2)  = |E'_1| + |E'_2| -1\,.
\end{equation}

Since $\rk(M'_i) \leqslant |E'_i|$ for $i = 1, 2$, we may assume  without loss of generality that $\rk(M'_1) = |E'_1|-1$ and $\rk(M'_2) = |E'_2|$. Then $M'_2$ is a  $|E'_2| \times |E_2'|$ nonsingular matrix and by Lemma~\ref{lem:facet}, $\inp{c'_2}{x} \geqslant 2$ defines a facet of $\cut{G'_2}$. By minimality of $G$, vector $c'_2$ is integral.

Furthermore, $\inp{c'_1}{x} \geqslant 2$ does not define a facet of $\cut{G'_1}$, otherwise $c'_1$ and hence $c$ would be integral. However, since  $\rk(M'_1) = |E'_1|-1$, inequality $\inp{c'_1}{x} \geqslant 2$ defines a ridge of $\cut{G'_1}$.

 Assume $u$ and $v$ are adjacent in $G$. Since $c(uv)$, $c'_2(e'_1)$ and $c'_1(e'_2)=c(\delta(S^*) \cap E_2)$ are all positive and $\inp{c'_2}{x} \geqslant 2$ defines a facet of $\cut{G'_2}$, Lemma~\ref{lem:facet-basic}.(ii) implies that edge $uv$ is in $E_1$. Since the inequality $\inp{c'_1}{x} \geqslant 2$ defines a ridge of $\cut{G'_1}$,  the  inequality $(c(uv) + c'_1(e'_2)) x_{uv} + \sum_{e \in E_1 \smallsetminus \{uv\}} c(e) x(e) \geqslant 2$,  obtained from $\inp{c'_1}{x} \geqslant 2$ by moving all the cost on parallel edges $uv \in E_1$ and $e'_2 \in E'_1 \smallsetminus E_1$ to edge $uv$, defines a facet of $\cut{G'_1}$. By minimality of $G$, the coefficients of this inequality are integral. Since $c'_2$ is an integral vector, $c'_1(e'_2)$ is an integer, which again implies that $c'_1$ is integral, and hence $c$ is integral as well, a contradiction. We conclude that $u$ and $v$ are not adjacent in $G$.

Because $c'_2$ is integral, $c'_2(e_1) = c(\delta(S^{*})\cap E_2) = 1$, since otherwise $c'_2(e_1) \in \{0,2\}$ and one of the values $c(E_i\cap \delta(S^{*}))$, $i=1,2$ is equal to $0$, showing that $u$ or $v$ is a cutnode and contradicting Remark~\ref{rem:2-connected_bis}.

Now, we show $|G_2|=3$. Assume not, then the graph $G''$ with node set $V_1 \uplus \{w\}$ and edge set $E_1 \cup \{uw,vw\}$ is a proper minor of $G$. The inequality $\inp{c''}{x}\geqslant 2$ is valid for $\cut{G''}$, where
\begin{equation*}
c''(e):=\begin{cases} c(e) &\text{if } e\in E_1\\
1 &\text{if } e \in \{uw,vw\}\,.
\end{cases}
\end{equation*}
Let $M''$ denote the matrix obtained from $M'_1$ by re-indexing with $uw$ the column indexed by $e'_2$, appending a new column indexed by $wv$ that is everywhere $0$ and finally adding two new rows for the cuts $\delta_{G''}(S^*) = (\delta(S^*) \cap E_1) \cup \{wv\}$ (assuming, without loss of generality, that $u \in S^*$ and $w \notin S^*$) and $\delta_{G''}(w) = \{uw,wv\}$. We leave it to the reader to check that $\rk(M'') = \rk(M_1) + 2 = |E_1| + 2$. The rows of $M''$ form a basis of cuts of $G''$ that are minimum with respect to $c''$. Hence, $\inp{c''}{x} \geqslant 2$ defines a facet of $\cut{G''}$. Due to minimality of $G$, vector $c''$ is integral, leading to integrality of $c$, and hence to a contradiction.

Thus, $|G_2| = 3$, that is, $V_2$ consists of $u$, $v$ and one other node $w$, and in $G$ the node $w$ is incident only to $u$ and $v$. Combining $c(uw)+c(vw)=c(\delta(w))\geqslant 2$ with $c(uw)\leqslant 1$, $c(vw)\leqslant 1$ (see Lemma~\ref{lem:basic_cost_one}), we get $c(uw) = c(vw) = 1$ as desired. 

Let us show (iii). For this assume that both $\{u\}$ and $\{v\}$ define minimum cuts in $G$. In this case, $\delta_{G'_1}(u)$, $\delta_{G'_1}(v)$ and $\delta_{G'_1}(\{u,v\})$ are minimum cuts in $G'_1$, since 
$$
c'_1(\delta_{G'_1}(\{u,v\}))=c(\delta_G(u))+c(\delta_G(v))-c(uw)-c(vw)=2\,.
$$
Thus, $\inp{c'_1}{x}\geqslant 2$ defines a facet of $\cut{G'_1}$. Indeed, the minimum cuts corresponding to the rows of $M'_1$ together with the cuts $\delta_{G'_1}(u)$, $\delta_{G'_1}(v)$ and $\delta_{G'_1}(\{u,v\})$ span $\R^{E'_1}$, because the rows of $M'_1$ restricted to $E_1$ span $\R^{E_1}$, and 
$$
\chi^{e'_2}=\frac{1}{2}\big(\chi^{\delta_{G'_1}(u)}+\chi^{\delta_{G'_1}(v)}-\chi^{\delta_{G'_1}(\{u,v\})}\big)\,.
$$ 
Due to minor-minimality of $G$, $c'_1$ is integral. Since both $c'_1$ and $c'_2$ are integral we conclude  that the vector $c$ is integral, a contradiction.

It remains to prove that $G - \{u,v\}$ has exactly two components. If $G - \{u,v\}$ has more than two components, then by applying the reasoning above to all possible pairs of subgraphs $G_1$, $G_2$ satisfying \eqref{eq:2-separation}, we conclude that $G - \{u,v\}$ has exactly three components, each of which containing a single node connected to both $u$ and $v$ by an edge of cost~$1$. Thus $c$ is integral (and $G = K_{3,2}$), a contradiction.
\end{proof}

%\begin{figure}
%   \begin{tikzpicture}[inner sep = 2.5pt, scale = .875]
%      \tikzstyle{vtx}=[circle,draw,thick,fill=gray!25]
%      \def\alpha{35}
%      \def\beta{100}
%      \def\cutshift{.3}
%      \def\cutangle{140}
%      \draw[thick] (0,1) arc (\alpha: 360-\alpha:{1/sin(\alpha)});
%      \draw[thick] (0,1) arc (\beta: 360-\beta:{1/sin(\beta)});
%      \draw[thick, red] ({-1/tan(\alpha)+cos(\cutangle)/sin(\alpha)},{sin(\cutangle)/sin(\alpha)}) arc ({-90-\cutangle/2+\alpha}:{-90+\cutangle/2-\alpha}:{(1/tan(\alpha)-cos(\cutangle)/sin(\alpha))/sin(\cutangle/2-\alpha)});
%      \draw[thick] (0,1) arc (180-\alpha: -180+\alpha:{1/sin(\alpha)});
%      \draw[thick] (0,1) arc (180-\beta: -180+\beta:{1/sin(\beta)});
%      \draw[thick, blue] ({-1/tan(\alpha)+cos(\cutangle)/sin(\alpha)},{-sin(\cutangle)/sin(\alpha)}) arc ({90+\cutangle/2-\alpha}:{90-\cutangle/2+\alpha}:{(1/tan(\alpha)-cos(\cutangle)/sin(\alpha))/sin(\cutangle/2-\alpha)});
%      \node[vtx] (s) at (0,1) {};
%      \node[vtx] (t) at (0,-1) {};
%      \node at (0,1.5) {$s$};
%      \node at (0,-1.5) {$t$};
%    \end{tikzpicture}
%\end{figure}

%%%%%%%%%%%%%%%%%%%%LEVELS%%%%%%%%%%%%%%%%%%%%%%
\medskip
\subsection{Level-$2$ sets}

We need the following basic but useful result proved in~\cite{FN92}.

\begin{lem} \label{lem:star_cycle}
Let $G = (V,E)$ be a connected graph with three nodes $v_1, v_2, v_3$ such that $G-v_i$ is connected for $i = 1, 2, 3$. Then $G$ contains at least one of the two following minors: a claw $K_{1,3}$ with nodes $u^*$, $v_1^*$, $v_2^*$, $v_3^*$ and center $u^*$ or a cycle on three nodes $v_1^*$, $v_2^*$, $v_3^*$. Moreover, in the operation which transforms $G$ into one of these two minors $v_i$ is mapped onto $v_i^*$ for $i = 1,2,3$.
\end{lem}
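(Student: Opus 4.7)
The plan is to build the required minor from a Steiner subtree for $\{v_1,v_2,v_3\}$ in $G$, patching it with one extra path supplied by the hypothesis when the tree alone is not enough. Let $H$ be a connected subgraph of $G$ containing $v_1,v_2,v_3$ with the minimum number of edges. Any cycle in $H$ would let us delete an edge while remaining connected, so $H$ is a tree; and any leaf of $H$ outside $\{v_1,v_2,v_3\}$ could be pruned together with its edge, so every leaf of $H$ belongs to $\{v_1,v_2,v_3\}$.

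\emph{Case A: $H$ has three leaves.} Then those leaves are exactly $v_1,v_2,v_3$. The handshake identity $\sum_v(d_v-2)=-2$ together with the contribution $-3$ from the three leaves forces the internal vertices to contribute $+1$ in total; hence $H$ has a unique internal branch vertex $u^*$ of degree $3$ and all other internal vertices have degree $2$. Thus $H$ is a subdivision of $K_{1,3}$, with internally disjoint paths from $u^*$ to $v_1,v_2,v_3$. Contracting the interior edges of these three paths (keeping each $v_i$ as a distinct vertex) yields a $K_{1,3}$-minor with center $u^*$ and $v_i^*=v_i$ for $i=1,2,3$.

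\emph{Case B: $H$ has two leaves.} After relabelling, $H$ is a path with endpoints $v_i$ and $v_k$ and $v_j$ strictly inside; call $H_1$ and $H_2$ the $v_i$-to-$v_j$ and $v_j$-to-$v_k$ subpaths. Because $G-v_j$ is connected, we may choose a path $P$ in $G-v_j$ from $v_i$ to $v_k$. Let $u$ be the last vertex along $P$ (starting from $v_i$) that lies in $V(H_1)\smallsetminus\{v_j\}$, and let $w$ be the first vertex strictly after $u$ along $P$ that lies in $V(H_2)\smallsetminus\{v_j\}$. By choice of $u$ and $w$, the subpath $P[u,w]$ is internally disjoint from $V(H)$ and, since $P$ avoids $v_j$, does not contain $v_j$. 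Split the interior of $P[u,w]$ into two consecutive halves. Let $B_1$ be the union of the $v_i$-to-$u$ segment of $H$ together with $u$ and the first half of the interior of $P[u,w]$; let $B_2$ be the open $u$-to-$w$ segment of $H$ (which contains $v_j$); and let $B_3$ be the union of the $w$-to-$v_k$ segment of $H$ together with $w$ and the second half of the interior of $P[u,w]$. Each $B_\ell$ is connected, they are pairwise disjoint, and the $H$-edge leaving $u$ into $B_2$, the $H$-edge entering $w$ from $B_2$, and the $P$-edge joining the two halves of $P[u,w]$ provide the three required inter-branch adjacencies. Contracting each $B_\ell$ produces a triangle on $v_i^*=v_i$, $v_j^*=v_j$, $v_k^*=v_k$.

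The main delicate point is Case B: the path $P$ need not be internally disjoint from $H$, so $H\cup P$ is not automatically a theta graph around $v_j$. The ``last exit / first re-entry'' choice of $u$ and $w$ extracts a bridge of $P$ that is clean of $H$ and skips across $v_j$, and splitting this bridge between the two flanking branch sets is what simultaneously preserves disjointness and delivers all three inter-branch edges needed for the triangle minor.
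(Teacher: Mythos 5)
Your proof is correct. Note that the paper does not actually prove Lemma~\ref{lem:star_cycle}: it is quoted as a known result from Fonlupt and Naddef~\cite{FN92}, so there is no internal proof to compare against, and your argument serves as a self-contained substitute. Your route is the natural one: an edge-minimal connected subgraph $H$ spanning $v_1,v_2,v_3$ is a tree whose leaves lie in $\{v_1,v_2,v_3\}$; with three leaves $H$ is a subdivided claw and yields the $K_{1,3}$ minor directly (here the phrase ``contract the interior edges'' should be read as contracting every edge of $H$ joining two non-leaf vertices, equivalently contracting each leg onto a single edge, so that the construction also covers legs of length one or two), while with two leaves $H$ is a path with the third terminal $v_j$ in its interior, and only then do you invoke the hypothesis, using connectedness of $G-v_j$. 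The delicate step, extracting from an arbitrary $v_i$--$v_k$ path $P$ in $G-v_j$ a subpath $P[u,w]$ that is internally disjoint from $H$ and bridges across $v_j$ via the last-exit/first-re-entry choice of $u\in V(H_1)\smallsetminus\{v_j\}$ and $w\in V(H_2)\smallsetminus\{v_j\}$, is argued correctly: $u\neq w$ because $V(H_1)\cap V(H_2)=\{v_j\}$ and $P$ avoids $v_j$, and the three branch sets $B_1,B_2,B_3$ are connected, pairwise disjoint, contain $v_i,v_j,v_k$ respectively, and carry the three required connecting edges (the two $H$-edges at $u$ and $w$ exist because $v_j$ lies strictly between them on $H$, and some edge of $P[u,w]$ joins $B_1$ to $B_3$ however its interior is split). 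As a bonus, your argument shows the conclusion already follows from connectedness of $G$ together with $G-v_j$ connected for the single ``middle'' terminal $v_j$, which is slightly more general than the stated hypothesis.
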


Now we show that family $\mathcal{F}$ can be assumed to contained no level-$2$ set.

\begin{lem} \label{lem:level-2}
Let $G = (V,E)$ be a minor-minimal non-2-graph, let $\inp{c}{x} \geqslant 2$ be a witness for $G$, and let $\mathcal{F}$ be a laminar family such that $\{\delta(T) \mid T \in \mathcal{F}\}$ is a basis of minimum cuts. Assume that $\mathcal{F}$ contains a level-$2$ set $S$. Then, one of the following holds:

\begin{enumerate}[(i)]
\item $S$ is maximal in $\mathcal{F}$ and replacing $S$ by $\overline{S}$ in $\mathcal{F}$ gives a family without level-$2$ set, or
\item $G$ has the prism or pyramid as minor. 
\end{enumerate}
\end{lem}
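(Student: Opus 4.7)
My approach mirrors the structural analysis in the proof of Lemma~\ref{lem:level-1}. First, I would unpack the laminar family inside $S$. Let $\mathcal{F}_S^* := \{T \in \mathcal{F} \mid T \subsetneq S\}$. By laminarity its maximal elements partition a subset of $S$, and by Lemma~\ref{lem:lev-0} together with Lemma~\ref{lem:level-1}, each maximal element is either a singleton $\{w_j\}$ (level $0$) or a pair $\{u_i, v_i\}$ with $u_iv_i \in E(G)$ and $c(u_i v_i) = 1$ (level $1$). Because $\level(S)=2$, there is at least one pair, say $\{u_1,v_1\},\ldots,\{u_p,v_p\}$ with $p \geqslant 1$, alongside singletons $\{w_1\},\ldots,\{w_q\}$, so that $|S|=2p+q$. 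I would then form the quotient graph $H$ obtained from $G[S]$ by contracting each cost-$1$ edge $u_iv_i$ and adapt the counting argument of Lemma~\ref{lem:level-1} (using that the restrictions $\delta(T)\cap E(G[S])$ for $T\in\mathcal{F}_S^*$ provide enough linear independence in $\R^{E(G[S])}$) to force $H$ to be either a tree or a connected graph containing a unique odd cycle, with rigid equalities on $c$ analogous to those obtained in the level-$1$ case.

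Next, I would study the boundary of $S$. Since $c(\delta(S))=2$ and $c(e)\leqslant 1$ for every edge (Lemma~\ref{lem:basic_cost_one}), the cut $\delta(S)$ contains at least two edges, so $S$ has at least two ``feet'' through which it connects to $V\smallsetminus S$. The crux of the proof is a case analysis combining the internal structure of $G[S]$ with the external connectivity of $V\smallsetminus S$. Applying Lemma~\ref{lem:star_cycle} to three appropriately chosen feet, after contracting $S$ to a single node in the outside graph, should produce either a claw $K_{1,3}$ or a triangle on these three nodes in $V\smallsetminus S$. Splicing this external claw or triangle back onto the cost-$1$ pair edges $u_iv_i$ inside $S$ (which act as the subdivisions of a pyramid, or the rungs of a prism) should then yield a prism or pyramid minor of $G$. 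Carrying this out carefully, and checking that in every configuration one of the two desired minors actually materializes, is the main obstacle; I expect the most delicate subcases to be when $H$ is very small (e.g.\ $p=1$, $q=0$) or when $|\delta(S)|=2$, both of which might force a side argument producing the other of the two minors.

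Finally, in the case where no prism or pyramid minor can be extracted, I would argue that the structural constraints force a very restricted situation: $S$ must be maximal in $\mathcal{F}$ (any set $T\in\mathcal{F}$ with $S\subsetneq T$ would provide additional external structure that once again triggers the claw/triangle construction via Lemma~\ref{lem:star_cycle}), and every set of $\mathcal{F}$ disjoint from $S$ must be a level-$0$ singleton (a level-$1$ pair outside of $S$, combined with the level-$1$ pair inside $S$ and the boundary edges, would similarly allow the prism/pyramid construction to go through). Under these hypotheses, replacing $S$ with $\overline{S}$ in $\mathcal{F}$ yields a laminar family in which $\overline{S}$ is a new level-$1$ set, every other set retains its old level, and no set has level $\geqslant 2$, establishing conclusion~(i).
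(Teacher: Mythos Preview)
Your plan has a genuine gap at its central step. You establish only that $\delta(S)$ has at least two edges, then immediately speak of ``three appropriately chosen feet'' to which Lemma~\ref{lem:star_cycle} is to be applied. Nothing in your outline bridges this gap: two edges in $\delta(S)$ need not yield three distinct neighbours of $S$ in $\overline{S}$, and even four edges might all be incident to just two nodes of $\overline{S}$. The paper's proof spends its main effort precisely here: it shows that $\delta(S)$ contains a \emph{matching of size $3$}, and this is where the two tools you never invoke---half-integrality (Lemma~\ref{lem:half_integral}) and the $2$-cutset lemma (Lemma~\ref{lem:2-cutset_bis})---do the real work. K\"onig's theorem reduces the absence of such a matching to the existence of a $2$-node cover of $\delta(S)$, and Lemma~\ref{lem:2-cutset_bis} then rules out every possible location of that cover. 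The same two lemmas are used again to verify the connectivity hypothesis of Lemma~\ref{lem:star_cycle} on each side of the cut; you do not address this hypothesis at all.

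Your internal analysis of $\mathcal{F}_S^*$ and the quotient graph $H$ is a detour: the paper never examines the laminar structure inside $S$ beyond knowing $|S| \geqslant 3$. Instead, once the size-$3$ matching $\{u_iv_i\}$ is in hand, Lemma~\ref{lem:star_cycle} is applied \emph{symmetrically to both $G[S]$ and $G[\overline{S}]$} (not just to the outside after contracting $S$), and the resulting pair of triangles/claws, spliced along the matching, directly yields a prism or pyramid. Your asymmetric plan---external claw/triangle glued to internal cost-$1$ edges---does not obviously produce either minor, because the internal cost-$1$ edges $u_iv_i$ need not be connected to the boundary of $S$ in the right pattern. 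In short, the approach you propose substitutes an unnecessary structural analysis of $\mathcal{F}_S^*$ for the two lemmas (\ref{lem:half_integral} and~\ref{lem:2-cutset_bis}) that actually drive the proof.
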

\begin{proof}
We assume that (i) does not hold and prove that (ii) holds. By assumption we have both $|S| \geqslant 3$ and $|\overline{S}| \geqslant 3$. We claim that $\delta(S)$ contains a matching of size~$3$. Indeed, otherwise by K\"onig's Theorem there exists a set $\{u,v\}$ such that every edge in $\delta(S)$ has an endpoint in $\{u,v\}$. By symmetry, it suffices to consider the two following cases.\medskip

\noindent \emph{Case 1}: $u \in S$ and $v \in \overline{S}$. Then $G - \{u,v\}$ has either more than two components or two components with at least~$2$ nodes each, contradicting Lemma~\ref{lem:2-cutset_bis}.\medskip

\noindent \emph{Case 2}: $u, v \in S$. Again, $\{u,v\}$ is a $2$-cutset of $G$. Lemma~\ref{lem:2-cutset_bis} implies that $S = \{u,v,w\}$ for some degree-$2$ node $w$ that is adjacent to both $u$ and $v$. Because $\chi^{\delta(S)} + \chi^{\delta(w)} = \chi^{\delta(u)} + \chi^{\delta(v)}$ and $S$ is in $\mathcal{F}$, only two of the sets $\{u\}$, $\{w\}$, $\{v\}$ can belong to $\mathcal{F}$. Because $S$ is level-$2$, it contains a level-$1$ set $T$. By Lemma~\ref{lem:level-1}, $T$ is either $\{u,w\}$ or $\{w,v\}$. Without loss of generality, we have $T = \{u,w\}$ and hence $\{u,w\}, \{u\}, \{w\} \in \mathcal{F}$, $\{v\} \notin \mathcal{F}$. Due to $\chi^{\delta(S)} + \chi^{\delta(w)} = \chi^{\delta(\{v,w\})} + \chi^{\delta(\{u,w\})}$, the laminar collection $\mathcal{F}'$ obtained from $\mathcal{F}$ by replacing $\{u,w\}$ by $\{v,w\}$ also yields a basis of minimum cuts. Since only one of the cuts in $\mathcal{F}'$ contains the edge $wv$, $\mathcal{F}'$ does not satisfy Lemma~\ref{lem:basic_two_cuts}, a contradiction.
\medskip

We conclude that $\delta(S)$ contains a matching of size~$3$. Let this matching be formed by the edges $u_iv_i$ with $u_i \in S$, $v_i \in \overline{S}$, for $i = 1, 2, 3$. Next, we claim that $G[S] - u_i$ is connected for every $i = 1, 2, 3$, and similarly $G[\overline{S}] - v_i$ is connected for all $i = 1, 2, 3$. Assume the opposite and let  $G[S] - u_1$ be not connected. Thus $G[S] - u_1$ is the union of two disjoint graphs $G_1=(V_1,E_1)$ and $G_2=(V_2,E_2)$ with at least one node each. Then,
\begin{equation*}
c(\delta(S)) = c(\delta(V_1:\overline{S})) + c(\delta(V_2:\overline{S})) + c(\delta(u_1:\overline{S})) = 2\,.
\end{equation*}
By Remark~\ref{rem:2-connected_bis}, $c(\delta(V_2:\overline{S}))>0$ and $c(\delta(V_1:\overline{S}))>0$. Moreover, $c(\delta(u_1:\overline{S})) \geqslant c(u_1v_1) > 0$. Hence, due to the half-integrality of $c$ (see Lemma~\ref{lem:half_integral}) there is $i \in \{1,2\}$ such that $\delta(V_i:\overline{S})$ consists of a single cost-$1/2$ edge. Without loss of generality, let $\delta(V_1:\overline{S})$ consist of a single cost-$1/2$ edge $wv$, $w\in V_1$, $v\in \overline{S}$. Then $\{u_1,v\}$ is a $2$-cutset in the graph $G$. Since $c(wv) = \frac{1}{2}$, it contradicts Lemma~\ref{lem:2-cutset_bis}. We conclude that $G[S] - u_i$ and $G[\overline{S}] - v_i$ are connected for $i = 1, 2, 3$.

By Lemma~\ref{lem:star_cycle}, it follows that $G[S]$ contains one of the following subgraphs:
\begin{itemize}
\item a cycle through $u_1$, $u_2$ and $u_3$, or
\item a subdivided claw with leaves $u_1$, $u_2$ and $u_3$,
\end{itemize}
and similarly for $G[\overline{S}]$. Together with the matching $\{u_iv_i \mid i = 1,2,3\}$ these subgraphs yield either a prism or a pyramid minor in $G$ and hence (ii) holds, except perhaps in the case where $G[S]$ and $G[\overline{S}]$ both contain a subdivided claw. 

In this case, we let $u_0$ and $v_0$ denote the centers of these subdivided claws and use Lemma~\ref{lem:2-cutset_bis}. By the lemma, $G - \{u_0,v_0\}$ has only one component with at least two nodes. Thus $u_i$, $v_i$, $i=1,2,3$ should be in the same component of $G - \{u_0,v_0\}$. Therefore, $G$ contains more edges besides those of the subdivided claw and matching. If $G[S] - u_0$ contains a $u_i$--$u_j$ path for $i \neq j$ then $G$ contains a pyramid minor, and (ii) holds, and similarly for $G[\overline{S}] - v_0$. Otherwise, we conclude that $\delta(S)$ contains at least five edges: the three edges of the matching $\{u_iv_i \mid i = 1,2,3\}$ and two more. However, by Lemma~\ref{lem:half_integral}, $\delta(S)$ contains at most four edges, a contradiction.
\end{proof}

\medskip
\subsection{Final Analysis}

\begin{proof}[Proof of Theorem~\ref{thm:2-graphs}]
Let $G = (V,E)$ be a minor-minimal non-$2$-graph, $\inp{c}{x} \geqslant 2$ be a witness for $G$ and $\mathcal{F}$ be a laminar family such that the cuts $\delta(S)$, $S \in \mathcal{F}$ form a basis of minimum cuts. We show that $G$ contains the prism or pyramid as minor.

We use the following notation: $E_\gamma := \{e \in E \mid c(e)=\gamma\}$ for $\gamma \in \{1,1/2\}$. By Lemma~\ref{lem:half_integral}, $c(e) \in \{1,\frac{1}{2}\}$ for all $e \in E$. So $E = E_{1/2} \cup E_1$.

Due to Lemma~\ref{lem:level-2}, we may assume that $\mathcal{F}$ has no level-$2$ set. Hence, we can partition the nodes of $G$ into three subsets $V_{-1}$, $V_0$ and $V_1$:
\begin{itemize}
\item $V_{-1}$ is the set of nodes that are not covered by sets in $\mathcal{F}$
\item $V_{0}$ is the set of nodes that are covered by a level-$0$ set in $\mathcal{F}$ and by no level-$1$ set in $\mathcal{F}$
\item $V_{1}$ is the set of nodes that are covered by a level-$1$ set in $\mathcal{F}$.
\end{itemize}

By Remark~\ref{rem:level1exist}, the family $\mathcal{F}$ contains a level-$1$ set, say $S$. By Lemma~\ref{lem:level-1}, $S = \{u,v\}$ where $uv \in E_1$ and $\{u\}, \{v\} \in \mathcal{F}$. Lemma~\ref{lem:2-cutset_bis} shows that $G$ does not contain a path of length greater or equal to three. Hence, $\delta(S)$ contains at least one cost-$1/2$ edge. Thus, we assume without loss of generality that one of the following is true: $\delta(S)\cap E_{1/2} = \delta(v:\overline{S})$ or $\delta(S)\cap E_{1/2} =\delta(S)$.

If $\delta(S)\cap E_{1/2} =\delta(v:\overline{S})$, node $u$ is adjacent to exactly two nodes: $v$ and some node $w$.  By Lemma~\ref{lem:2-cutset_bis}, $\delta(w)$ is not a minimum cut. Thus $w$ lies in $V_{-1}$. 

Note also that the degree of every $v \in V_{0}$ in the graph $G$ is at least $3$. Indeed, assume that is not true, then the node $v$ is adjacent to exactly two nodes $u$ and $w$ and the edges $uv$, $wv$ are cost-$1$ edges, since  $c(\delta(v))\geqslant2$ and $c(e) \leqslant 1$ for all edges $e$ (see Lemma~\ref{lem:basic_cost_one}). Moreover, by Lemma~\ref{lem:basic_two_cuts} neither $u$ nor $w$ lies in $V_{-1}$. 
On the other side, by Lemma~\ref{lem:2-cutset_bis} not both $\delta(u)$ and $\delta(w)$ are minimum cuts, hence at least one of $u$ and $w$ lies in $V_{-1}$, a contradiction.

\begin{claim*}\label{lem:charging}
Both $V_0$ and $E_{1/2}\cap \delta(V_{-1}:V_{1})$ are empty.
\end{claim*}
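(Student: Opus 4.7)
The plan is to prove both assertions by contradiction, leveraging Lemma~\ref{lem:basic_two_cuts}, Lemma~\ref{lem:level-1}, the half-integrality from Lemma~\ref{lem:half_integral}, and especially Lemma~\ref{lem:2-cutset_bis}.

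I will first record preliminary structural facts. Since $\mathcal{F}$ has no level-$\geqslant 2$ set, the only sets of $\mathcal{F}$ containing a given node $v$ are $\{v\}$ (when $v \in V_0 \cup V_1$) and the unique level-$1$ pair containing $v$ (when $v \in V_1$). By Lemma~\ref{lem:basic_two_cuts}, no edge lies inside $V_{-1}$, no edge joins $V_{-1}$ to $V_0$, and every neighbor of a node in $V_{-1}$ lies in $V_1$; likewise every neighbor of a node in $V_0$ lies in $V_0 \cup V_1$. Moreover, every $v_{-1} \in V_{-1}$ has degree at least~$3$: if it had degree~$2$, its two neighbors $a, b$ would lie in $V_1$ and $\{a, b\}$ would be a $2$-cutset isolating $v_{-1}$, so Lemma~\ref{lem:2-cutset_bis}(iii) would force one of $\delta(a), \delta(b)$ to be non-minimum, contradicting $a, b \in V_1$.

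For the assertion $E_{1/2} \cap \delta(V_{-1}:V_1) = \varnothing$, suppose $v_{-1} v_1 \in E_{1/2}$ and let $\{v_1, v_1'\}$ be the level-$1$ pair containing $v_1$. Lemma~\ref{lem:level-1} gives $c(v_1 v_1') = 1$ and $c(\delta(v_1) \smallsetminus v_1 v_1') = 1$; together with $c(v_1 v_{-1}) = \frac{1}{2}$ and half-integrality of $c$, this forces $v_1$ to have exactly one other external edge $v_1 x$, of cost $\frac{1}{2}$, with $x \neq v_{-1}$. Since $v_{-1}$ has degree at least~$3$ with all neighbors in $V_1$, there exists some neighbor $y$ of $v_{-1}$ with $y \neq v_1$. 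I now distinguish two cases. If $y \notin \{v_1', x\}$, then $\{v_1, y\}$ will be shown to be a $2$-cutset whose removal leaves $v_{-1}$ in a small component, yielding a contradiction via Lemma~\ref{lem:2-cutset_bis}(iii) because $v_1, y \in V_1$ makes both $\delta(v_1), \delta(y)$ minimum. If instead every extra neighbor of $v_{-1}$ lies in $\{v_1', x\}$, the local configuration is so constrained that applying Lemma~\ref{lem:2-cutset_bis} to $\{v_1', x\}$ (or to a pair of neighbors of a suitable small-degree level-$1$ node) delivers the same contradiction.

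For the assertion $V_0 = \varnothing$, suppose $v_0 \in V_0$. Since $\deg(v_0) \geqslant 3$, $c(\delta(v_0)) = 2$, and all neighbors of $v_0$ lie in $V_0 \cup V_1$, the node $v_0$ has at least two cost-$\frac{1}{2}$ neighbors $w_1, w_2 \in V_0 \cup V_1$, so both $\delta(w_1)$ and $\delta(w_2)$ are minimum cuts. An elementary computation gives $c(\delta(\{v_0, w_1, w_2\})) = 4 - 2 c(w_1 w_2)$, and a case analysis on the adjacency and cost of $w_1 w_2$ either produces a minimum cut that, swapped for $\{v_0\}$ in $\mathcal{F}$, yields a family violating Lemma~\ref{lem:basic_two_cuts}, or uncovers a $2$-cutset with both endpoints in $V_0 \cup V_1$, again contradicting Lemma~\ref{lem:2-cutset_bis}(iii). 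The main obstacle is the careful bookkeeping required to locate, in each configuration, a $2$-cutset whose two nodes both lie in $V_0 \cup V_1$; the preliminary confinement of the neighborhoods of $V_0$ and $V_{-1}$, together with the degree lower bound on $V_{-1}$-nodes, is precisely what makes this location tractable in every subcase.
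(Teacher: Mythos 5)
Your preliminary observations are sound (no edge inside $V_{-1}$ or between $V_{-1}$ and $V_0$, all neighbours of $V_{-1}$ in $V_1$, the degree bounds), but both halves of your argument hinge on steps that are asserted rather than proved, and I do not see how they could be proved. In the first half, the contradiction with Lemma~\ref{lem:2-cutset_bis}(iii) is only available once $\{v_1,y\}$ (or $\{v_1',x\}$) is known to be a $2$-cutset, and nothing in the local configuration forces $G-\{v_1,y\}$ to be disconnected: removing $v_1$ and $y$ does not even isolate $v_{-1}$, since $v_{-1}$ has a third neighbour, and $G$ may simply have no $2$-cutset near this configuration at all. Lemma~\ref{lem:2-cutset_bis} constrains $2$-cutsets once they exist; it cannot produce one. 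The same objection hits the second half: in the generic cases ($w_1w_2$ absent, or present with cost $\frac12$) no argument is offered for why any $2$-cutset with both ends in $V_0\cup V_1$ should exist, and in the remaining case ($w_1w_2$ a cost-$1$ edge) the proposed swap of $\{v_0,w_1,w_2\}$ for $\{v_0\}$ requires showing both that the new family is still a basis of minimum cuts and that some edge then lies in at most one of its cuts -- when $w_1,w_2\in V_1$ with partners outside $\{v_0,w_1,w_2\}$, the edges $v_0w_1$ and $v_0w_2$ can still lie in two cuts (the singleton and the level-$1$ pair), so no violation of Lemma~\ref{lem:basic_two_cuts} follows.

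The underlying problem is that the claim is a global density statement, not a local one: a degree-$3$ node of $V_0$, or a single cost-$\frac12$ edge from $V_{-1}$ to $V_1$, is by itself consistent with Lemmas~\ref{lem:basic_two_cuts}, \ref{lem:level-1}, \ref{lem:half_integral} and \ref{lem:2-cutset_bis}, so no contradiction can be extracted from the neighbourhood of one offending node or edge. The paper instead uses a discharging count: each edge carries unit charge, split between the maximal sets of $\mathcal{F}$ containing its endpoints (all of it going to one side when the other endpoint is uncovered). Each singleton $\{v\}$, $v\in V_0$, then receives at least $\frac32$ (by the degree bound), and each maximal level-$1$ pair $S$ receives exactly $3+\frac12\,|E_{1/2}\cap\delta(V_{-1}:S)|$, so the total charge is at least $\frac32|V_0|+\frac32|V_1|+\frac12|E_{1/2}\cap\delta(V_{-1}:V_1)|$; on the other hand the total charge equals $|E|=|\mathcal{F}|=|V_0|+\frac32|V_1|$, because $\mathcal{F}$ is a basis with no level-$2$ sets. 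Comparing the two counts forces both $V_0$ and $E_{1/2}\cap\delta(V_{-1}:V_1)$ to be empty. Some such global trade-off between $|\mathcal{F}|=|E|$ and the edge charges appears unavoidable; to repair your proof you would need to add a counting argument of this kind rather than further case analysis.
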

\begin{proof}
Let us prove this by a discharging argument. Give to each of the edges in $G$ a unit charge, so the total charge is $|E|$.

Now let the edges of $G$ distribute their charge to the sets of $\mathcal{F}$ in the following manner: the charge of an edge $e:=uv$ is equally distributed between the maximal element of $\mathcal{F}$ containing $u$ and the maximal element of $\mathcal{F}$ containing $v$. If there is no element of $\mathcal{F}$ that contains $u$, then the whole charge of the edge is transmitted to the maximal element of $\mathcal{F}$ containing $v$; and similarly if there is no element of $\mathcal{F}$ that contains $v$. Thus, if the maximal elements of $\mathcal{F}$ containing respectively $u$ and $v$ are equal, then the whole value of the edge is transmitted to this maximal element. Note, that the total charge of the sets in $\mathcal{F}$ after this procedure is $|E|$, since by Lemma~\ref{lem:basic_two_cuts} there is no edge $e:=uv$ in $G$ such that both $u$ and $v$ lie in $V_{-1}$.

Every node $v\in V_0$,  is contained in only one set in $\mathcal{F}$, namely in the set $\{v\}$. Moreover, from the discussion before we know that $v$ is incident to at least three edges, and thus the total charge of the set $\{v\}$ is at least $3/2$ for every $v\in V_0$.

By Lemma~\ref{lem:level-1}, for every node $v\in V_1$ there is a unique node $u \in V_1$, such that $S:=\{v,u\}$ is inclusionwise maximal in $\mathcal{F}$. Recall, that there are two cases for the set $S$.\medskip

\noindent \emph{Case 1}: $u$ is adjacent to $v$ by a cost-$1$ edge and to one node in $V_{-1}$ by another cost-$1$ edge; besides $u$, node $v$ is adjacent to two other nodes by cost-$1/2$ edges. Thus, the charge of $S$ is equal to $3+\frac{1}{2} |E_{1/2}\cap \delta(V_{-1}:S)|$.\medskip

\noindent \emph{Case 2}:  $uv$ is a cost-$1$ edge and both $u$, $v$ are incident to two cost-$1/2$ edges. In this case, the charge of the set $S$ also equals $3+\frac{1}{2} |E_{1/2}\cap \delta(V_{-1}:S)|$.\medskip

Hence, the total charge of the family $\mathcal{F}$ is at least
\begin{align*}
\frac{3}{2}|V_0|+\sum_{\substack{S\in \mathcal{F}\\ \level(S)=1}}\big( 3+\frac{1}{2} |E_{1/2}\cap \delta(V_{-1}:S)|\big)=\frac{3}{2}|V_0|+\frac{3}{2}|V_1|+\frac{1}{2}|E_{1/2}\cap \delta(V_{-1}:V_1)|\,.
\end{align*}
But the total charge should be equal to $|E|$, and thus to the cardinality of $\mathcal{F}$, which in its turn is equal to $|V_0|+ \frac{3}{2}|V_1|$. Combining this with the estimation of the charge from below we get the following inequality
\begin{equation*}
|V_0|+\frac{3}{2} |V_1|\geqslant\frac{3}{2}|V_0|+\frac{3}{2}|V_1|+\frac{1}{2}|E_{1/2}\cap \delta(V_{-1}:V_1)|\,,
\end{equation*}
which proves that the sets $V_0$ and $E_{1/2}\cap \delta(V_{-1}:V_1)$ are empty.
\end{proof}

Now consider the graph $G \delete E_1$. By Lemma~\ref{lem:basic_two_cuts}, no edge of $G$ has both ends in $V_{-1}$. By the claim, the set $V_0$ is empty and all the edges in $\delta(V_1:V_{-1})$ are cost-$1$ edges. Thus, in the graph $G \delete E_1$ each node of $V_{-1}$ has degree~$0$ and each node of $V_1$ has degree~$0$ or $2$.

Hence, the edges of $G \delete E_1$ form a collection of node-disjoint cycles. If there are at least two distinct cycles $C'$, $C''$ in the collection then, by Lemma~\ref{lem:2-cutset_bis}, $G$ contains three node-disjoint paths connecting  $V(C')$ and $V(C'')$, showing that $G$ has the prism as minor.

Thus, we may assume that the edges of $G\delete E_1$ form one cycle $C$. 
The cycle $C$ is an odd cycle, since the cuts $\delta(S)$, $S \in \mathcal{F}$ are linearly independent and for each level-$1$ set $S = \{v,w\}$ in $\mathcal{F}$, $\chi^{\delta(S)}, \chi^{\delta(v)}, \chi^{\delta(w)}$ have the same linear span as $\chi^{\{vw\}}, \chi^{\delta(v) \smallsetminus \{vw\}}, \chi^{\delta(w) \smallsetminus \{vw\}}$. Since $|V_{1}|$ is even, there is a node $v_1 \in V_{1}$ with $v_1 \not\in V(C)$. In $G$, node $v_1$ is adjacent to exactly two nodes by cost-$1$ edges, one of which lies in $V_{-1}$, say node $u$. Because $V_{-1}$ is a stable set in $G$ and all the edges of $\delta(V_1:V_{-1})$ are cost-$1$ edges, and due to Lemma~\ref{lem:2-cutset_bis}, $u$ is adjacent to $v_1$ and to at least two other nodes in $V_1$, say $v_2$ and $v_3$. Because all the nodes of $C$ are incident to two cost-$1/2$ edges and one cost-$1$ edge, and $v_i$, $i = 1, 2, 3$ is incident to two cost-$1$ edges, none of the $v_i$'s is a node of $C$. Letting $w_i$, $i = 1, 2, 3$ denote the unique neighbor of $v_i$ within $V_1$, we see that each of the $w_i$'s is a node of $C$. Then $C$ together with the three paths $u, v_i, w_i$, $i = 1, 2, 3$ yield a pyramid minor in $G$. The theorem follows.
\end{proof}

\section{Equivalence} \label{sec:equivalence}

Here we discuss the links between our forbidden minor characterization of $2$-graphs (Theorem~\ref{thm:2-graphs}) and Fonlupt and Naddef's forbidden minor characterization of TSP-perfect graphs (Theorem~\ref{thm:Fon-Nadd}). We prove that each theorem implies the other one in the sense that, with some overhead, \emph{any} proof of one of the theorems yields a proof of the other one. In the reductions we give, the overhead is most substantial when reducing Theorem~\ref{thm:2-graphs} to Theorem~\ref{thm:Fon-Nadd} because we make use of our $2$-cutset lemma (Lemma~\ref{lem:2-cutset_bis}). 

\begin{prop}
Theorem~\ref{thm:2-graphs} implies Theorem~\ref{thm:Fon-Nadd}.
\end{prop}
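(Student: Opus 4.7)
The plan is to exploit the blocking polarity between $\cut{G}$ and $\subtour{G}$: the vertices of $\subtour{G}$ correspond bijectively to the facet-defining inequalities of $\cut{G}$ with positive right-hand side, each rescaled so that the right-hand side equals $2$. Via this correspondence, Theorem~\ref{thm:2-graphs} will give tight control over vertices of $\subtour{G}$ under a no-prism-and-no-pyramid-minor assumption, and it will remain to see that $\Theta$ is exactly the obstruction to each such vertex being a tour.

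For the necessity direction of Theorem~\ref{thm:Fon-Nadd}, I will use that the property $\GTSP{G}=\subtour{G}$ is closed under taking minors (deletion turns $\subtour{G}$ into a face and contraction into a projection, both compatible with $\GTSP{\cdot}$). Hence it suffices to exhibit, in each of the prism, pyramid, and $\Theta$, a vertex of the corresponding subtour polyhedron that is not a tour. For the prism and pyramid this is immediate from the facets in Figure~\ref{fig:prism_pyramid}: they have right-hand side $4$ in minimum integer form, so rescaling to right-hand side $2$ produces a non-integer vertex of $\subtour{\cdot}$, which cannot lie in the integer polyhedron $\GTSP{\cdot}$. For $\Theta$, the all-ones vector is a vertex of $\subtour{\Theta}$ of degree $3$ at each branching node, hence not Eulerian and not a tour.

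For the sufficiency direction, assume $G$ has none of the three minors, and let $c$ be a vertex of $\subtour{G}$. By Theorem~\ref{thm:2-graphs} and Lemma~\ref{lem:minordom} the graph $G$ is a $2$-graph, so the facet of $\cut{G}$ corresponding to $c$ has right-hand side $\lambda\in\{1,2\}$ in minimum integer form. If $\lambda=1$, the Conforti--Rinaldi--Wolsey theorem identifies the facet with $\inp{\chi^T}{x}\geqslant 1$ for a spanning tree $T$ and yields $c=2\chi^T$, which is the tour traversing each tree edge twice. If $\lambda=2$, then $c$ is already integer and, since each edge of its support lies in some cut of value $2$, we have $c\in\{0,1,2\}^E$. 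The constraints $c(\delta(S))\geqslant 2$ force the support of $c$ to be connected and spanning, so the only remaining obstruction to $c$ being a tour is the existence of a vertex $v$ with $c(\delta(v))$ odd.

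The crux of the proof is to rule out such a parity defect under the absence of a $\Theta$ minor, and this is where the main difficulty lies. My plan is to argue by contradiction, using that $c$ is a vertex of $\subtour{G}$: each of the $|E|$ linearly independent tight cuts $\delta(S)$ with $c(\delta(S))=2$ consists either of a single cost-$2$ edge or of two cost-$1$ edges. Starting from an odd-degree vertex $v_0$ I will pick a second odd-degree vertex $v_1$ (whose existence is guaranteed by the handshake lemma) and combine the rich tight-cut structure with a Menger/uncrossing argument in the multigraph obtained by doubling each cost-$2$ edge of $G$ to produce three internally disjoint $v_0$--$v_1$ paths in the support of $c$. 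The parity defect at the endpoints will force each of these paths to have length at least three, so that contracting internal edges yields a $\Theta$ minor of $G$, contradicting the hypothesis. This gives $\subtour{G}\subseteq\GTSP{G}$ and, with the trivial reverse inclusion, Theorem~\ref{thm:Fon-Nadd}.
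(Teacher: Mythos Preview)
Your overall plan is sound, and the necessity direction as well as the $\lambda=1$ case of sufficiency are correctly handled. The gap is in the heart of the $\lambda=2$ case, where you propose to go from an odd-degree vertex to a $\Theta$ minor by finding three internally disjoint paths between two specific odd-degree vertices $v_0,v_1$.

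There are two problems with that step. First, the Menger argument you invoke yields only two edge-disjoint $v_0$--$v_1$ paths in the doubled multigraph, since all you know is that every cut has $c$-value at least $2$; nothing in the ``rich tight-cut structure'' forces three internally disjoint paths between this particular pair, and uncrossing only tells you the tight cuts can be made laminar, not that they are $3$-connecting $v_0$ to $v_1$. Second, your claim that the parity defect at the endpoints forces each path to have length at least three is unsupported: the parity of $c(\delta(v_0))$ says nothing about the length of an individual $v_0$--$v_1$ path.

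The paper's argument avoids both issues by reversing the logic. Work in the support graph $G^c$ (still a $2$-graph, and the facet now has full support). Either $G^c$ contains \emph{some} pair $u,v$ with three internally disjoint $u$--$v$ paths, or it does not. In the first case the paths must each have length at least three, and this follows from facet-definingness rather than parity: a length-$1$ path is an edge $uv$ lying in no tight cut (any cut through $uv$ also cuts the other two paths, hence has value $\geqslant 3$), and a length-$2$ path $uwv$ forces $x(uw)=x(wv)$ on every tight cut, contradicting Lemma~\ref{lem:facet}. Hence $G^c$, and therefore $G$, contains a $\Theta$ minor. In the second case no two cycles of $G^c$ share an edge, so every block of $G^c$ is an edge or a cycle; then necessarily $c(e)=2$ on bridges and $c(e)=1$ on cycle edges, and $c(\delta(v))$ is even for every $v$ automatically. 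The dichotomy on the global structure of $G^c$, rather than on a chosen pair of odd vertices, is what makes the argument go through.
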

\begin{proof}
Let $G = (V,E)$ be a $2$-graph, and let $\inp{c}{x} \geqslant 2$ define a facet of $\cut{G}$ with full support. Since $\lambda^c(G) = 2$, graph $G$ is connected. By Lemma~\ref{lem:facet-basic}, $G$ is simple and $c(e) \in \{1,2\}$ for all edges $e$. 

Assume that $G$ has two nodes $u$ and $v$ such that there are three internally disjoint paths $P_i$, $i = 1, 2, 3$ between $u$ and $v$ (that is, $P_i$ and $P_j$ have only nodes $u$ and $v$ in common for $i \neq j$). If $P_i$ is a single edge for some $i = 1, 2, 3$, then $P_i = uv$ and there is no minimum cut containing $uv$, contradicting the fact that $\inp{c}{x} \geqslant 2$ is facet-defining. Otherwise, if $P_i = uwv$ for some $i = 1, 2, 3$ and some node $w$, then there is at most one minimum cut containing $uw$ or $wv$, namely $\delta(\{w\})$. Hence all minimum cuts satisfy $x(uw) = x(wv)$, contradicting again the fact that $\inp{c}{x} \geqslant 2$ is facet-defining. Otherwise, each $P_i$, $i = 1, 2, 3$ has at least three edges and $G$ contains a \Mone{} minor.

Now assume that $G$ does not have two nodes with three internally disjoint paths between them. Then no two cycles of $G$ share an edge, that is, every $2$-connected component of $G$ is a cycle. In this case $c(e) = 1$ for all edges $e$ contained in a cycle and $c(e) = 2$ for all edges $e$ that are bridges, so that $c(\delta(v))$ is even for all nodes $v$. 
\end{proof}

\begin{prop}
Theorem~\ref{thm:Fon-Nadd} implies Theorem~\ref{thm:2-graphs}.
\end{prop}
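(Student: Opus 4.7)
The plan is as follows. Let $G$ be a minor-minimal non-$2$-graph and let $\inp{c}{x} \geqslant 2$ be a witness for $G$. Since some facet of $\cut{G}$ has, when written in minimum integer form, right-hand side strictly greater than $2$, the blocker-polarity description of $\cut{G}$ from the end of Section~\ref{sec:intro-subtour} exhibits a non-integer vertex of $\subtour{G}$. In particular $\GTSP{G}\neq \subtour{G}$, so Theorem~\ref{thm:Fon-Nadd} gives that $G$ has a prism, a pyramid, or an \Mone{} minor.

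Next I handle the three cases. If $G$ has a prism minor, then the prism cannot be a \emph{proper} minor of $G$: otherwise the minor-minimality of $G$ would make the prism a $2$-graph, contradicting the fact, established in the introduction, that the prism is itself a non-$2$-graph. Hence $G$ equals the prism. By the same argument $G$ equals the pyramid if $G$ has a pyramid minor. It remains to dispose of the case where $G$ has an \Mone{} minor but neither a prism nor a pyramid minor.

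To rule out this remaining case I combine Lemma~\ref{lem:2-cutset_bis} (the $2$-cutset lemma) with Lemma~\ref{lem:half_integral} (half-integrality of the witness). Because $\Delta(\text{\Mone{}})=3$, an \Mone{} minor of $G$ is a topological minor, so $G$ contains two distinct nodes $u^{*},v^{*}$ joined by three internally node-disjoint paths $P_{1},P_{2},P_{3}$ of length at least $3$; I fix such paths minimising their total length and set $H:=P_{1}\cup P_{2}\cup P_{3}$. Any chord of $H$ or additional path in $G$ connecting the interiors of two different $P_{i}$'s would either contradict this minimality (by shortening some $P_{i}$) or would complete a prism or pyramid minor, contradicting the subcase assumption; hence no such chord or external bridge exists. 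But then $\{u^{*},v^{*}\}$ is a $2$-cutset of $G$ whose removal leaves \emph{three} non-singleton components (the interiors of $P_1,P_2,P_3$), directly violating Lemma~\ref{lem:2-cutset_bis}(i). In the case where $G$ remains essentially $3$-connected along $H$, the half-integrality constraint $c(e)\in\{\tfrac{1}{2},1\}$ combined with the cut-cost equalities $c(\delta(u^{*}))=c(\delta(v^{*}))=2$ along three paths of length $\geqslant 3$ cannot simultaneously be satisfied, again contradicting Lemma~\ref{lem:half_integral}.

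The main obstacle is precisely this last subcase: eliminating \Mone{} minors in the absence of prism and pyramid minors requires juggling the global topology imposed by three long internally disjoint paths with the very restrictive metric prescribed by Lemma~\ref{lem:half_integral}, while using Lemma~\ref{lem:2-cutset_bis} to constrain the admissible $2$-cutsets. The rest of the proof is a quick dispatch by Theorem~\ref{thm:Fon-Nadd} and minor-minimality.
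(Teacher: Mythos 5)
Your reduction frame is the same as the paper's: a witness yields a non-integral vertex of $\subtour{G}$, hence $\GTSP{G}\neq\subtour{G}$, Theorem~\ref{thm:Fon-Nadd} supplies a prism, pyramid or \Mone{} minor, the first two cases collapse to $G$ itself by minor-minimality, and the \Mone{}-only case must be excluded using Lemma~\ref{lem:2-cutset_bis} and Lemma~\ref{lem:half_integral}. The gap is that you never actually exclude it. Your dichotomy --- every chord of $H$ or path joining the interiors of two distinct $P_i$'s either contradicts length-minimality or completes a prism/pyramid minor --- is false. Take three $u$--$v$ paths of length $3$ and add a path from $v_1$ (the neighbour of $v$ on $P_1$) to $u_2$ (the neighbour of $u$ on $P_2$), internally disjoint from everything else: the resulting graph has no two node-disjoint cycles (so no prism minor), one checks it has no pyramid minor either, and it admits no triple of internally disjoint paths of length $\geqslant 3$ of smaller total length. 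This is exactly the kind of configuration the paper is forced to confront (its Figure~\ref{fig:3-paths}). Worse, your first branch is vacuous: Lemma~\ref{lem:2-cutset_bis}(i) applied to $\{u^*,v^*\}$ shows the interiors of $P_1,P_2,P_3$ must all lie in one component of $G-\{u^*,v^*\}$, so connecting paths always exist and ``three non-singleton components'' never occurs. Everything therefore rests on your second branch, which is only an assertion (``cannot simultaneously be satisfied''), and whose premise $c(\delta(u^*))=c(\delta(v^*))=2$ is unjustified: nothing forces the stars of the branch nodes to be minimum cuts.

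For comparison, the paper's proof does real work precisely at this point: using the absence of prism and pyramid minors it pins down the attachment pattern of the connecting paths (after relabelling, one path $Q_{12}$ joins $v_1$ to $u_2$ and another $Q_{23}$ joins $u_2$ to $v_3$, internally disjoint from the $P_i$'s and from each other), deduces that $\{u,v_1\}$, $\{u,v_3\}$ and $\{v,u_2\}$ are $2$-cutsets so that each $P_i$ has length exactly $3$, then combines Lemma~\ref{lem:2-cutset_bis}(ii) (cost $1$ on the edges at $u_1,v_2,u_3$) with Lemma~\ref{lem:half_integral} (cost at least $\frac{1}{2}$ elsewhere) to conclude that the edge $uu_2$ lies in no minimum cut, contradicting Lemma~\ref{lem:facet-basic}(iii). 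You need an argument of this kind for the \Mone{} case; as written, the central step of the proposition is missing.
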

\begin{proof}
By contradiction, suppose there is a minor-minimal non-$2$-graph $G$ that does not have a prism or pyramid minor. By Theorem~\ref{thm:Fon-Nadd}, $G$ has a \Mone{} minor, hence two nodes $u$ and $v$ and three internally disjoint $u$--$v$ paths $P_1, P_2, P_3$ each with at least three edges. Let $u_i$ and $v_i$ denote the neighbours of $u$ and $v$ respectively on the path $P_i$, for $i = 1, 2, 3$.

By Lemma~\ref{lem:2-cutset_bis}, the paths $P_i - \{u,v\}$, $i = 1, 2, 3$ belong to the same connected component of $G - \{u,v\}$. Thus, we may assume that $G$ has both a path $Q_{12}$ between  $P_1$ and $P_2$ and a path $Q_{23}$ between $P_2$ to $P_3$, where $Q_{12}$ and $Q_{23}$ avoid the nodes  $u$, $v$. 

For pairwise distinct $i, j, k=1, 2, 3$ there is no path between $P_i$ and $P_j$, which avoids $V(P_k)\cup \{u_i, u_j\}$, because the graph $G$ does not have a pyramid minor. Analogously, for pairwise distinct $i, j, k=1, 2, 3$ there is no path between $P_i$ and $P_j$, which avoids $V(P_k)\cup \{v_i, v_j\}$.  

Moreover, if for pairwise distinct $i, j, k=1, 2, 3$ there are a $v_i$--$u_j$ path and a $v_j$--$u_k$ path, which are internally disjoint and both avoid the nodes $\{u, v, u_i, v_k\}$, then the graph $G$ has a prism minor. The same statements holds when the roles of $u$ and $v$ are exchanged.

Thus, without loss of generality we may assume that the endpoints of $Q_{12}$ are $v_1$ and $u_2$ and that the endpoints of $Q_{23}$ are $u_2$ and $v_3$. By the same reasoning, $Q_{12}$ and $Q_{23}$ are both internally disjoint from $P_1 \cup P_2 \cup P_3$ and from each other. Continuing in the same vein, we see that $\{u,v_3\}$, $\{u,v_1\}$ and $\{v, u_2\}$ are $2$-cutsets of $G$. Hence, each of the paths $P_i$, $i = 1, 2, 3$ has length exactly $3$, see Figure~\ref{fig:3-paths}.

Now consider a witness $\inp{c}{x} \geqslant 2$ for $G$. By Lemmas~\ref{lem:half_integral} and \ref{lem:2-cutset_bis}, the weights of the edges incident to any one of $u_1$, $v_2$, $u_3$ are all $1$ while the weights of all other edges are at least $1/2$. But then the edge $uu_2$ is in no minimum cut, contradicting Lemma~\ref{lem:facet-basic}.
\end{proof}

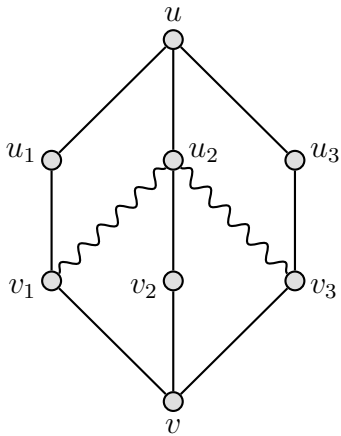
\begin{figure}[ht]
\centering
\begin{tikzpicture}[inner sep = 2.5pt,thick, scale=1.6]
\tikzstyle{vtx}=[circle,draw,thick,fill=gray!25]
\node[vtx] (s) at (0,0) {};
\node[vtx] (t) at (0,3) {};
\node[vtx] (v1) at (-1,1) {};
\node[vtx] (v2) at (0,1) {};
\node[vtx] (v3) at (1,1) {};
\node[vtx] (w1) at (-1,2) {};
\node[vtx] (w2) at (0,2) {};
\node[vtx] (w3) at (1,2) {};
\draw (s) -- (v1) -- (w1) -- (t); % path P_1
\draw (s) -- (v2) -- (w2) -- (t); % path P_2
\draw (s) -- (v3) -- (w3) -- (t); % path P_3
\draw[snake it] (v1) -- (w2); % path Q_{12}
\draw[snake it] (w2) -- (v3); % path Q_{23}
\node[below] at (s.south) {$v$};
\node[above] at (t.north) {$u$};
\node[left] at (v1.south west) {$v_1$};
\node[left] at (w1.north west) {$u_1$};
\node[left] at (v2.south west) {$v_2$};
\node[right] at (w2.north east) {$u_2$};
\node[right] at (v3.south east) {$v_3$};
\node[right] at (w3.north east) {$u_3$};
\end{tikzpicture}
\caption{Structure of a minor-minimal non-$2$-graph with a \Mone{} minor.}
\label{fig:3-paths}
\end{figure}

Phrased in traveling salesman language, the most important difference between our main result and Fonlupt and Naddef's main result seems to be as follows. We show that forbidding the prism and pyramid as minors already guarantees the integrality of $\subtour{G}$, and that forbidding also a \Mone{} minor implies $\subtour{G} = \GTSP{G}$. Fonlupt and Naddef~\cite{FN92} work directly with three forbidden minors.

\section{Concluding Remarks} \label{sec:concluding_remarks}

There exists a family $(H_n)_{n \geqslant 6,\;n\;\mathrm{even}}$ of planar graphs such that $|H_n| = n$ and $k^*(H) = 2^{\Omega(n)}$, see Figure~\ref{fig:caterpillar}. This example is adapted from a known integrality gap example, see~\cite{CKKK08}. By the Excluded Grid Theorem \cite{RS86}, this proves that for every fixed $k$, all $k$-graphs have constant tree-width. In other words, there exists a function $f$ such that 
\begin{equation}\label{eq:tree_width}
	\mathrm{tw}(G) \leqslant f(k^*(G))
\end{equation}
for all graphs $G$. 

This has algorithmic consequences. For instance, solving TSP instances on graphs $G$ such that the vertices of $\subtour{G}$ have bounded fractionalities can be done in polynomial time. This is due to the fact that such graphs have $k^*(G) = O(1)$ and thus $\mathrm{tw}(G) = O(1)$ and the TSP can be solved in polynomial time via dynamic programming.

However, the converse of \eqref{eq:tree_width} does not hold: since the graphs $H_n$ have constant tree-width (even constant path-width), bounding $\mathrm{tw}(G)$ does not guarantee that $k^*(G)$ is bounded. Thus, there is no function $g$ such that  $k^*(G)\leqslant g(\mathrm{tw}(G))$.

\begin{figure}[ht]
\begin{center}
 	  \begin{tikzpicture}[inner sep = 2.5pt, xscale = 2, yscale = 0.8]
      \tikzstyle{vtx}=[circle,draw,thick,fill=gray!25]
      \foreach \i in {2,...,6} {
        \draw[very thick,magenta] (\i,0) -- (\i+1,0);
        \draw[very thick,magenta] (\i,0) -- (\i,3);
      }
      \foreach \i in {2,...,5} {
        \draw[thick] (\i,3) edge[bend right = 12] (\i+2,0);
        \draw[thick] (\i,3) -- (\i+1,3);
      }
      \draw[very thick,magenta] (1,0) -- (2,0);
      \draw[thick] (1,0) -- (2,3);
      \draw[thick] (6,3) -- (7,0);
      \draw[thick] (1,0) edge[bend right = 60] (3,0);
      \foreach \i in {2,...,6} {
        \node[vtx] at (\i,1.5) {};
        \node[vtx] at (\i,3) {};
        \node[vtx] at (\i+.5,0) {};
        \node[vtx] at (\i,0) {};
      }
      \node[vtx] at (1,0) {};
      \node[vtx] at (1.5,0) {};
      \node[vtx] at (7,0) {};
      \node at (1.45,2.1) {\small $\frac{1}{2}$};
      \node at (2,-1) {\small $\frac{1}{2}$};
      \node at (2.5,3.45) {\small $\frac{1}{4}$};
      \node at (3.5,3.45) {\small $\frac{3}{8}$};
      \node at (4.5,3.45) {\small $\frac{5}{16}$};
      \node at (5.5,3.45) {\small $\frac{11}{32}$};
      \node at (2.5,1.5) {\small $\frac{1}{4}$};
      \node at (3.5,1.5) {\small $\frac{3}{8}$};
      \node at (4.5,1.5) {\small $\frac{5}{16}$};
      \node at (5.5,1.5) {\small $\frac{11}{32}$};
      \node at (6.65,2.25) {\small $\frac{21}{32}$};
    \end{tikzpicture} 
\end{center}
\caption{Graph $H_{12}$ together with a facet-defining inequality $\inp{c}{x} \geqslant 2$ for $\cut{H_{12}}$. We let $c(e) = 1$ for every magenta edge $e$. The costs $c(e)$ of the other edges $e$ are as indicated on the figure.}
\label{fig:caterpillar}
\end{figure}
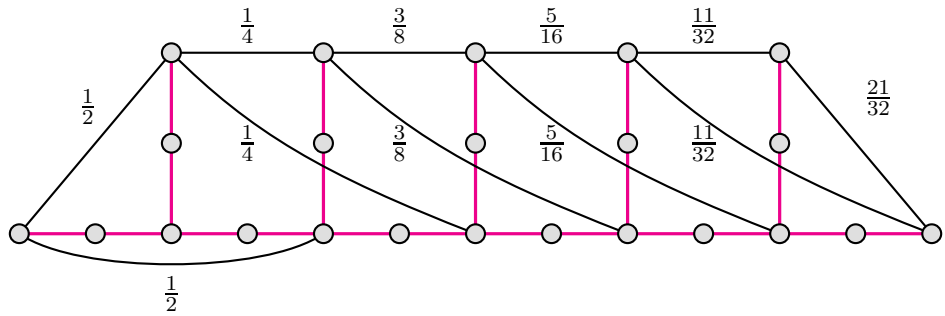
  	
%Here are some questions that are open, and that we find interesting:
%
%\begin{enumerate}[(Q1)]
%\item What is the complexity of computing $k^*(G)$?
%\item Does there exist a nonplanar minor-minimal non-$k$-graph?
%\item What are all the minor-minimal non-$4$-graphs?
%\item If $G$ is a $O(1) \times n$ grid, do we have $k^*(G) = O(1)$? More concretely, if $G$ is a $3 \times n$ grid, do we have $k^*(G) \leqslant 4$?
%\item If $G$ is a minor-minimal non-$k$-graph for $k \geqslant 4$ and $k$ even, is it true that $G$ is a $(k+2)$-graph?
%\item Let $G$ be a minor-minimal non-$k$-graph. Is it true that $G$ has a unique witness $\inp{c}{x} \geqslant k$? Is it true that $G$ has exactly $||G||$ minimum cuts with respect to $c$? 
%\item Let $G$ be a minor-minimal non-$k$-graph, let $\inp{c}{x} \geqslant k$ be a witness for $G$ and let $\mathcal{F}$ be a laminar family of subsets of $V(G)$ such that $\delta(S)$, $S \in \mathcal{F}$ form a basis of minimum cuts with respect to $c$. Let $T$ denote the tree that arises when $\mathcal{F}$ is represented in the plane by taking the dual of the arrangement of disjoint closed curves. Is it true that $T$ is cubic? It it true that $T$ is unique for a fixed value of $k$? 
%\item Is it true that a minor-minimal non-$k$-graph for a fixed $k$ can be obtained one from the other by $\lambda$--$Y$ operations?
%\end{enumerate}

\bibliographystyle{plain}

\bibliography{cut-dom}

\begin{thebibliography}{10}

\bibitem{CKKK08}
Joseph Cheriyan, Howard Karloff, Rohit Khandekar, and Jochen K{\"o}nemann.
\newblock On the integrality ratio for tree augmentation.
\newblock {\em Operations Research Letters}, 36:399--4011, 2008.

\bibitem{CRW04}
Michele Conforti, Giovanni Rinaldi, and Laurence Wolsey.
\newblock On the cut polyhedron.
\newblock {\em Discrete Mathematics}, 277:279--285, 2004.

\bibitem{CFN85}
G\'erard Cornu{\'e}jols, Jean Fonlupt, and Denis Naddef.
\newblock The {T}raveling {S}alesman {P}roblem on a graph and some related
  integer polyhedra.
\newblock {\em Mathematical Programming}, 33:1--27, 1985.

\bibitem{Dirac63}
Gabriel~A. Dirac.
\newblock Some results concerning the structure of graphs.
\newblock {\em Canadian Mathematics Bulletin}, 6:183--210, 1963.

\bibitem{Edmonds73}
J.~Edmonds.
\newblock Edge-disjoint branchings.
\newblock In B.~Rustin, editor, {\em Combinatorial Algorithms}, Courant
  Computer Science Symposium 9, Monterey, California, 1972, pages 91--96. Acad.
  Press, New York, 1973.

\bibitem{FN92}
Jean Fonlupt and Denis Naddef.
\newblock The traveling salesman problem in graphs with some excluded minors.
\newblock {\em Mathematical Programming}, 53:147--172, 1992.

\bibitem{Lam05}
Fumei Lam.
\newblock {\em Traveling {S}alesman {P}ath {P}roblems}.
\newblock PhD thesis, Massachusetts Institute of Technology, September 2005.

\bibitem{Lovasz65}
L{\'a}szl{\'o} Lov{\'a}sz.
\newblock On graphs not containing independent circuits.
\newblock {\em Mat. Lapok}, 16:289--299, 1965.

\bibitem{RS86}
N.~Robertson and P.~D. Seymour.
\newblock {Graph Minors. V. Excluding a Planar Graph}.
\newblock {\em Journal of Combinatorial Theory, Series B}, 41(1):92--114, 1986.

\bibitem{RS04}
N.~Robertson and P.~D. Seymour.
\newblock Graph minors. {XX}. {W}agner's conjecture.
\newblock {\em Journal of Combinatorial Theory, Series B}, 92(2):325--357,
  2004.

\bibitem{SchrijverBook}
A.~Schrijver.
\newblock {\em Combinatorial optimization. {P}olyhedra and efficiency}.
\newblock Springer-Verlag, Berlin, 2003.

\end{thebibliography}

\end{document}